\title{Wasserstein Distributionally Robust Inverse Multiobjective Optimization}
\author{
	Chaosheng Dong\footnote{} \\
	Amazon\\
	\texttt{chaosd@amazon.com} \\
	\And
	Bo Zeng \\
	University of Pittsburgh\\
	\texttt{bzeng@pitt.edu} \\
}
\begin{document}
	
	\maketitle
	
	\begin{abstract}
	Inverse multiobjective optimization provides a general framework for the unsupervised learning task of inferring parameters of a multiobjective decision making problem (DMP), based on a set of observed decisions from the human experts. However, the performance of this framework relies critically on the selection of appropriate decision making structure, a set of observed decisions that are sufficient and of high qualities, and a parameter space that contains enough information about the DMP. To hedge against the uncertainties in the hypothetical DMP, the data, and the  parameter space, we investigate in this paper the distributionally robust approach for inverse multiobjective optimization. Specifically, we leverage the Wasserstein metric to construct a ball centering at the empirical distribution of these decisions. We then formulate a Wasserstein distributionally robust inverse multiobjective optimization problem (WRO-IMOP) that minimizes a worst-case expected loss function, where the worst case is taken over all distributions in the Wasserstein ball. We  show that the excess risk of the WRO-IMOP estimator has a sub-linear convergence rate. Furthermore, we propose a semi-infinite reformulations of the WRO-IMOP and develop a cutting-plane algorithm that converges to any $\delta$-optimal solution in finite iterations. Finally, we demonstrate the effectiveness of our method on both a synthetic multiobjective quadratic program and a real world portfolio optimization problem.
\end{abstract}

\section{Introduction}
\label{intro: wimop}

Assuming decision makers are rational when making decisions, inverse optimization has been proposed to  infer or to learn their utility functions and restrictions based on observed decisions or behaviors. 
Traditional inverse optimization considers the decision making problem (DMP) with a single objective function \cite{ahuja2001inverse,keshavarz2011imputing,bertsimas2015data,chan2019inverse,barmann2017emulating,dong2018ioponline}.
Such consideration is suitable for a single decision maker or when decision makers are of the same type. Nevertheless, when multiple observations are available, especially when they are collected from different decision makers, it might not be a reasonable framework to estimate DMP's parameters. As shown in \cite{dong2020imop}, the derived inference could be drastically different from decision makers' actual
intention. 

In the real world, it is basically ubiquitous that decisions are trade-offs among multiple criteria.   Hence, inverse optimization considering multiobjective optimization functions, referred to as inverse multiobjective optimization problem (IMOP), provides a more appropriate and compelling tool to learn humans' decision making scheme or emulate their behaviors \cite{dong2020imop,dong2018inferring}. Specifically, the learner observes a set of \emph{Pareto optimal} decisions, i.e., $ \{\mfy_{i}\}_{i \in [N]} $ from the following  multiobjective optimization problem (MOP):
\begin{align*}
\begin{array}{llll}
\min\limits_{\mfx \in \bR^{n}} & \{f_{1}(\mfx,\theta),f_{2}(\mfx,\theta),\ldots,f_{p}(\mfx,\theta)\} \\
\;s.t. & \mfx \in X(\theta),
\end{array}
\end{align*}
where $ \theta $ is the true but unknown parameter for the expert's multiobjective DMP. Indeed, inverse multiobjective optimization answers the following fundamental question -- \textit{how do we learn $ \theta $ across multiple objective functions given $ \{\mfy_{i}\}_{i \in [N]} $?}

This tool is generally applicable to learn the underlying multiobjective decision making schemes in many scenarios. They, once obtained, would presumably play critical roles in various aspects, such as assisting organizations in designing products and providing services to customers. For example, a portfolio manager typically applies the Markovitz mean-variance model—risk and profit are two objectives to optimize—to make investment decisions \cite{markowitz1952portfolio}. To automate the portfolio investment, one needs to learn the key parameter of this model, e.g., the risk aversion score or the expected returns of the assets, by observing the portfolio manager's historical investment records.

We note that the effectiveness and accuracy of IMOP relies critically on the selection of appropriate decision making structure, a set of observed decisions that are sufficient and of high qualities, and a parameter space that contains as much information about the objective functions or constraints as possible. In practice, however, it is highly unlikely that all of these critical factors would be satisfied. For example,  outliers in a limited amount of decisions would render the empirical distribution of decisions deviate from the true distribution, and thus significantly weaken the predictive power of the IMOP estimator. We mention that this issue is not unique to IMOP model and one can observe similar findings in other inverse optimization models  \cite{keshavarz2011imputing,bertsimas2015data,chan2014generalized,esfahani2017data,birge2017inverse,barmann2017emulating,chan2018trade,dong2018inferring,dong2018ioponline,chan2019inverse,chan2020inverse}.

To hedge against these uncertainties contained in the hypothetical decision making model, the data, and the selected parameter space, we investigate  the distributionally robust approach for inverse multiobjective optimization. More specifically, motivated by \cite{Shafieezadeh2015wdrlogistic,gao2016distributionally,esfahani2018distributionally}, etc., we use the Wasserstein metric \cite{villani2008optimal} to construct the uncertainty set centered at the empirical distribution of the observed decisions. Subsequently, we propose a distributionally robust inverse multiobjective optimization program that minimizes the worst-case risk of loss, where the worst case is taken over all distributions in the uncertainty set. By such a distributionally robust framework, we aim to bridge the discrepancy between the lack of certainties in the information and the expectation for the accurate prediction of human's behavior.

\subsection{Related Work}
Our work  is most related  to \cite{dong2018inferring,dong2020imop} that propose a general framework of using inverse multiobjective optimization to infer the objective functions or constraints of the multiobjective DMP, based on observations of Parete optimal solutions. Research in \cite{dong2018inferring,dong2020imop} presents the framework of empirical risk minimization for this unsupervised learning task, which generally works well when there are few uncertainties in the model, data or hypothetical parameter space. In contrast, we believe that those uncertainties root inherently in the applications of inverse multiobjective optimization, and we aim to hedge against their influences by adopting the distributionally robust optimization paradigm based on Wasserstein metric.  We demonstrate both theoritically and experimentally that our method has out-of-sample performance guarantees under uncertainties.

Our work draws inspirations from \cite{esfahani2017data} that develops a distributionally robust approach for inverse optimization to infer the utility function from sequentially arrived observations.
They aim to mitigate the poor performance of inverse optimization models \cite{ahuja2001inverse,keshavarz2011imputing,bertsimas2015data,chan2019inverse,barmann2017emulating,dong2018ioponline} when the learner has imperfect information.
They show that the associated distributionally robust inverse optimization approach offers out-of-sample performance guarantees under such a situation.
However, their approach is specially designed for the simpler case where the DMP has only one objective function. Differently, our approach considers a more complex situation and is suitable when the decision making problem has multiple objectives. Moreover, instead of using the suboptimality loss function, we consider another one that would better capture the learner's purpose to predict the decision maker's decisions. Due to the nonconvex nature of our loss function, extensive efforts are made to develop the algorithm for solving the resulting nonconvex minmax program.

\subsection{Contributions}
We summarize our contributions as follows:
\begin{itemize}
	\item We present a novel Wasserstein distributionally robust framework for constructing inverse multiobjective optimization estimator.  We use the prominent Wasserstein metric to construct the uncertainty set centering at the empirical distribution of observed decisions.
	\item
	We show that the proposed framework has statistical performance guarantees, and the excess risk of the distributionally robust inverse multiobjective optimization estimator would converge to zero with a sub-linear rate as the number of observed decisions approaches to infinity.
	\item We reformulate the resulting minmax problem as a semi-infinite program and develop a cutting-plane algorithm which converges to any $\delta$-solution in finite iterations. We demonstrate the effectiveness of our method on both a multiobjective quadratic program and a portfolio optimization problem.
\end{itemize}

\section{Problem setting}
\label{sec:problem setting for wimop}
Throughout this paper we will use $ \one $ to denote the vector of all ones, $ \zero $ to denote the vector of all zeros, and $ \bI $ to denote the identity matrix. For any $ n \geq 1 $, the set of integers $ \{1,\ldots,n\} $ is denoted by $ [n] $. The Euclidean norm is denoted by $ || \cdot||_{2} $. The inner product of two vectors $ \mfx $ and $ \mfy $ is defined by $ <\mfx, \mfy> = \mfx^{T}\mfy$. Before going further into different sections, we provide the main notations that will be used throughout the paper. The detailed descriptions of the notations could be found in Table \ref{table: list of notation}.

\begin{table}
	\caption{List of notation}
	\label{table: list of notation}
	\begin{tabular}{ll}
		\toprule
		{Indices: } \\
		$n$ & Dimension of the decision variable in \ref{mop} \\
		$n_\theta$ & Dimension of $\theta$ in \ref{mop} \\
		$p$ & The number of objective functions in \ref{mop}\\
		$q$ & The number of constraints in \ref{mop}\\
		{Sets: } \\
		$ \mathcal{P} $ & The Wasserstein ambiguity set \\
		$\Theta$ & The parameter space for $ \theta $\\
		$S(w,\theta)$ & The set of optimal solutions for \ref{weighting problem}\\
		$\mathscr{W}_{p}$ & The standard $ (p-1) $-simplex defined as $ \{ w\in \bR^{p}_{+} : \; \one^{T}w = 1 \} $\\
		$\mathscr{W}_{p}^{+}$ & The interior of the standard $ (p-1) $-simplex\\
		$X(\theta)$ & The feasible region for \ref{mop}\\
		$X_{P}(\theta)$  & The Pareto optimal set for \ref{mop} \\
		$ \mathcal{Y} $  & The support  of the observations \\
		{Model parameters: } \\
		$\theta$ & The model parameter that determines \ref{mop}\\
		$  \mathbf{x} \in \mathbb{R}^{n}  $    & Decision variable in \ref{mop} \\
		$ Q \in \mathbb{S}^{n \times n}_{+} $ & Covariance matrix of the assets in Portfolio optimization  \\
		$ \mathbf{c} \in \mathbb{R}^{n} $    & Expected profit vector \\
		$ A \in \mathbb{R}^{q \times n} $    & Linear constraint matrix \\
		$ \mathbf{b} \in \mathbb{R}^{q}$ & Right-hand side in constraints \\
		$\mathbf{y} \in \mathbb{R}^{n}$ & observed noisy decision (investment portfolio) \\
		$\mathbf{u} \in \mathbb{R}^{q}$ & dual variables for \ref{mop}\\
		$\mathbf{z} \in \{0, 1\}^{q} $  & binary variables \\
		$K$ & The number of weight samples in \eqref{surrogate loss function} \\
		$N$ & The number of observations \\
		$l(\mfy,\theta)$ & The \ref{loss function} \\
		$l_{K}(\mfy,\theta)$ & The \ref{surrogate loss function} \\
		$B$ & The upper bound for the $ L_2 $ norm  of the elements in the feasible region $ X(\theta) $ \\
		$R$ & The upper bound for the $ L_2 $ norm  of the elements in the support $ \mathcal{Y} $ of the observations \\
		$D$ & The upper bound for the $ L_2 $ norm  of the elements in $ \Theta $\\
		$\epsilon$ & The radius of the 1-Wasserstein ball\\
		$\kappa$ & The perturbation constant regarding to the change of objective values\\
		$\lambda_{l}$ & $ f_{l} $ in \ref{mop} is strongly convex with parameter $ \lambda_{l} $ \\
		$\lambda$ &  The minimum among  $\{\lambda_{l}\}_{l \in[p]}$ \\
		\bottomrule
	\end{tabular}
\end{table}

\subsection{Multiobjective decision making problem}
We consider a family of parametrized multiobjective decision making problems of the form
\begin{align*}
\label{mop}
\tag*{MOP}
\begin{array}{llll}
\min\limits_{\mfx \in \bR^{n}} & \big\{f_{1}(\mfx,\theta),f_{2}(\mfx,\theta),\ldots,f_{p}(\mfx,\theta)\big\} \\
\;s.t. & \mfx \in X(\theta)
\end{array}
\end{align*}
where $ p \geq 2 $ and $f_{l} : \bR^{n}\times\bR^{n_{\theta}} \mapsto \bR $ for each $ l \in [p] $. Assume paratmeter $\theta \in \Theta \subseteq \mathbb{R}^{n_\theta}$. We denote the vector of objective functions by $ \mathbf{f}(\mfx,\theta) = (f_{1}(\mfx,\theta),f_{2}(\mfx,\theta),\ldots,f_{p}(\mfx,\theta))^{T} $. Assume $ X(\theta) = \{\mfx \in \bR^{n}: \mathbf{g}(\mfx,\theta) \leq \zero, \mfx \in \bR^{n}_{+}\} $, where $ \mathbf{g}(\mfx,\theta) = (g_{1}(\mfx,\theta), \ldots, g_{q}(\mfx,\theta))^{T} $ is another vector-valued function with $g_{k} : \bR^{n}\times\bR^{n_{\theta}} \mapsto \bR  $ for each $ k \in [q] $.

\begin{definition}[Pareto optimality]
	For fixed $ \theta $, a decision vector $\mfx^{*} \in X(\theta)$ is said to be Pareto optimal or efficient if there exists no other decision vector $\mfx \in X$ such that $f_{i}(\mfx,\theta) \leq f_{i}(\mfx^{*},\theta)$ for all $i \in [p]$, and $f_{k}(\mfx,\theta) < f_{k}(\mfx^{*},\theta)$ for at least one  $k \in [p]$.
\end{definition}
In the study of multiobjective optimization, the set of all Pareto optimal solutions is denoted by $ X_{P}(\theta) $ and called the Pareto optimal set.
One common way to derive a Pareto optimal solution is to solve a problem with a single objective function constructed by the weighted sum of original objective functions \cite{gass1955computational}, i.e.,
\begin{align}
\label{weighting problem}
\tag*{WP}
\begin{array}{llll}
\min & w^{T}\mathbf{f}(\mfx,\theta)\\
\;s.t. & \mfx \in X(\theta),
\end{array}
\end{align}
where $ w = (w^{1},\ldots,w^{p})^{T}$ is the nonnegative weight vector in the $ (p-1) $-simplex $\mathscr{W}_{p}  \equiv \{ w\in \bR^{p}_{+} : \; \one^{T}w = 1 \}$.
We denote $S(w,\theta)$ the set of optimal solutions of \ref{weighting problem},  i.e.,
\[S(w,\theta) = \arg\min_{\mfx}\left\{ w^{T}\mathbf{f}(\mfx,\theta): \mfx \in X(\theta) \right\}.\]

Let $ \mathscr{W}_{p}^{+} = \{ w\in \bR^{p}_{++} : \; \one^{T}w = 1 \} $. Following from Theorem 3.1.2 of \cite{miettinen2012nonlinear}, we have:
\begin{proposition}\label{prop:positive}
	If $ \mfx \in S(w,\theta) $ and $ w \in \mathscr{W}_{p}^{+} $, then $ \mfx \in X_{P}(\theta) $.
\end{proposition}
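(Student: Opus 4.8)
The plan is to argue by contradiction, exploiting the strict positivity of the weights to turn a Pareto-style improvement into a strict decrease of the \ref{weighting problem} objective.

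First I would suppose that $\mfx \in S(w,\theta)$ with $w \in \mathscr{W}_{p}^{+}$ but, contrary to the claim, $\mfx \notin X_{P}(\theta)$. By the definition of Pareto optimality, failing to be Pareto optimal means there exists a feasible $\mfx' \in X(\theta)$ that dominates $\mfx$: that is, $f_{i}(\mfx',\theta) \leq f_{i}(\mfx,\theta)$ for every $i \in [p]$, with strict inequality $f_{k}(\mfx',\theta) < f_{k}(\mfx,\theta)$ for at least one index $k$.

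Next I would compare the weighted-sum objective values at $\mfx'$ and $\mfx$. Multiplying each componentwise inequality by its weight $w^{l} \geq 0$ and summing yields $w^{T}\mathbf{f}(\mfx',\theta) \leq w^{T}\mathbf{f}(\mfx,\theta)$. The crucial step is to upgrade this to a \emph{strict} inequality: because $w \in \mathscr{W}_{p}^{+}$ forces $w^{k} > 0$, the strictly improved $k$-th term contributes $w^{k} f_{k}(\mfx',\theta) < w^{k} f_{k}(\mfx,\theta)$ while all other terms are nonincreasing, so $w^{T}\mathbf{f}(\mfx',\theta) < w^{T}\mathbf{f}(\mfx,\theta)$. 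Since $\mfx' \in X(\theta)$ attains a strictly smaller value than $\mfx$, this contradicts the optimality $\mfx \in S(w,\theta)$, and therefore no such dominating $\mfx'$ exists, i.e., $\mfx \in X_{P}(\theta)$.

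The main obstacle — really the only substantive point — is the passage from the weak to the strict inequality for the weighted sum; this is exactly where the interior assumption $w \in \mathscr{W}_{p}^{+}$ (rather than merely $w \in \mathscr{W}_{p}$) is indispensable, since a zero weight on the improved coordinate would leave open the possibility of a tie in the aggregated objective and break the contradiction.
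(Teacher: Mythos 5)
Your proof is correct and is exactly the standard contradiction argument behind Theorem 3.1.2 of Miettinen's text, which the paper simply cites for this proposition without reproducing a proof. You correctly isolate the one substantive point---that strict positivity of $w^{k}$ upgrades the weak weighted-sum inequality to a strict one---so your self-contained argument matches the intended proof.
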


The next result from Theorem 3.1.4 of \cite{miettinen2012nonlinear} states that all the Pareto optimal solutions can be found by the weighting method for convex \ref{mop}.
\begin{proposition}\label{weight_convex}
	Assume that \ref{mop} is convex. If $\mfx\in X$ is an Pareto optimal solution, then there exists a weight vector $w \in \mathscr{W}_{p}$ such that $ \mfx $ is an optimal solution of \ref{weighting problem}.
\end{proposition}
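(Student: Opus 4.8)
The plan is to transfer the problem into objective (image) space and apply a supporting-hyperplane argument, the classical route behind Theorem 3.1.4 of \cite{miettinen2012nonlinear}. Here convexity of \ref{mop} means that $X(\theta)$ is a convex set and each $f_{l}(\cdot,\theta)$ is a convex function. Let $\mfx$ be the given Pareto optimal solution, write $\mathbf{z}^{*} = \mathbf{f}(\mfx,\theta)$, and introduce the upper image of the feasible region,
\[
Z \;=\; \mathbf{f}(X(\theta)) + \bR^{p}_{+} \;=\; \{\mathbf{z}\in\bR^{p} : \mathbf{z} \geq \mathbf{f}(\mfx',\theta)\ \text{for some}\ \mfx'\in X(\theta)\}.
\]
The first step is to check that $Z$ is convex: given $\mathbf{z}^{1},\mathbf{z}^{2}\in Z$ with witnesses $\mfx^{1},\mfx^{2}\in X(\theta)$ and $\lambda\in[0,1]$, convexity of $X(\theta)$ gives $\lambda\mfx^{1}+(1-\lambda)\mfx^{2}\in X(\theta)$, and convexity of each $f_{l}$ yields $f_{l}(\lambda\mfx^{1}+(1-\lambda)\mfx^{2},\theta)\leq \lambda z^{1}_{l}+(1-\lambda)z^{2}_{l}$, so the convex combination of $\mathbf{z}^{1}$ and $\mathbf{z}^{2}$ again lies in $Z$.

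Second, I would argue that $\mathbf{z}^{*}$ is a boundary point of $Z$. Clearly $\mathbf{z}^{*}\in Z$. On the other hand, Pareto optimality of $\mfx$ says that no feasible $\mfx'$ satisfies $\mathbf{f}(\mfx',\theta)\leq\mathbf{z}^{*}$ with at least one strict inequality; in particular, every point $\mathbf{z}^{*}-\varepsilon\one$ with $\varepsilon>0$ fails to belong to $Z$, since otherwise some feasible $\mfx'$ would strictly dominate $\mfx$ in all coordinates. As $\varepsilon\downarrow 0$ these points approach $\mathbf{z}^{*}$, so $\mathbf{z}^{*}\notin\operatorname{int}(Z)$ and hence $\mathbf{z}^{*}\in\partial Z$. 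The supporting hyperplane theorem for convex sets then furnishes a nonzero $w\in\bR^{p}$ with $w^{T}\mathbf{z}\geq w^{T}\mathbf{z}^{*}$ for every $\mathbf{z}\in Z$.

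Third, I would promote $w$ to a legitimate weight vector and read off optimality. Because $\mathbf{z}^{*}+\mathbf{e}_{i}\in Z$ for each standard basis vector $\mathbf{e}_{i}$, the supporting inequality forces $w_{i}\geq 0$, so $w\geq\zero$; combined with $w\neq\zero$ this gives $\one^{T}w>0$, and rescaling $w\mapsto w/(\one^{T}w)$ places $w$ in $\mathscr{W}_{p}$ without affecting the inequality. Finally, for any $\mfx'\in X(\theta)$ we have $\mathbf{f}(\mfx',\theta)\in Z$, whence $w^{T}\mathbf{f}(\mfx',\theta)\geq w^{T}\mathbf{z}^{*}=w^{T}\mathbf{f}(\mfx,\theta)$; that is, $\mfx$ minimizes $w^{T}\mathbf{f}(\cdot,\theta)$ over $X(\theta)$ and is therefore an optimal solution of \ref{weighting problem}.

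I expect the main obstacle to be the correct geometric encoding of Pareto optimality: it only guarantees that $Z$ avoids the \emph{open} dominated cone $\mathbf{z}^{*}-\operatorname{int}(\bR^{p}_{+})$, which is exactly what places $\mathbf{z}^{*}$ on the boundary and forces $w\geq\zero$, but \emph{not} $w>\zero$. This is consistent with the conclusion asserting $w\in\mathscr{W}_{p}$ rather than $w\in\mathscr{W}_{p}^{+}$; extracting a strictly positive weight (the converse direction of Proposition \ref{prop:positive}) would require the stronger notion of proper Pareto optimality. Some care is also needed to confirm that $\mathbf{z}^{*}$ is genuinely a boundary, and not interior, point of $Z$ before invoking the supporting hyperplane theorem.
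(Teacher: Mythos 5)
Your proof is correct and complete: the convexity of $Z=\mathbf{f}(X(\theta))+\bR^{p}_{+}$, the boundary argument extracted from Pareto optimality, the supporting hyperplane, and the recession-direction step forcing $w\geq\zero$ are all sound, and your closing remark correctly explains why only $w\in\mathscr{W}_{p}$ rather than $w\in\mathscr{W}_{p}^{+}$ can be guaranteed. The paper gives no proof of its own---it simply invokes Theorem 3.1.4 of \cite{miettinen2012nonlinear}---and your argument is precisely the classical supporting-hyperplane proof behind that cited result, so the approaches coincide.
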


By Propositions \ref{prop:positive} - \ref{weight_convex}, we can summarize the relationship between $ S(w,\theta)$ and $ X_{P}(\theta) $ as follows.
\begin{corollary}\label{coro:inclusion}
	For convex \ref{mop},
	\begin{align*}
	\bigcup_{w\in \mathscr{W}_{p}^{+} }S(w,\theta) \subseteq X_{P}(\theta) \subseteq \bigcup_{w\in \mathscr{W}_{p}}S(w,\theta)
	\end{align*}
\end{corollary}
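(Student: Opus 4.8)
The plan is to establish the two set inclusions separately, since each follows directly from one of the two preceding propositions by a routine element-chasing argument; convexity of \ref{mop} is needed only for the second inclusion.

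For the first inclusion $\bigcup_{w\in \mathscr{W}_{p}^{+} }S(w,\theta) \subseteq X_{P}(\theta)$, I would take an arbitrary point $\mfx$ in the left-hand union. By definition of the union there exists some weight vector $w \in \mathscr{W}_{p}^{+}$ with $\mfx \in S(w,\theta)$, i.e.\ $\mfx$ is an optimal solution of \ref{weighting problem} for a \emph{strictly positive} weight vector. Proposition \ref{prop:positive} then applies verbatim and yields $\mfx \in X_{P}(\theta)$. Since $\mfx$ was arbitrary, the inclusion follows.

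For the second inclusion $X_{P}(\theta) \subseteq \bigcup_{w\in \mathscr{W}_{p}}S(w,\theta)$, I would take an arbitrary Pareto optimal point $\mfx \in X_{P}(\theta)$; in particular $\mfx \in X(\theta)$. Invoking Proposition \ref{weight_convex}, which is where the convexity hypothesis enters, there exists a weight vector $w \in \mathscr{W}_{p}$ such that $\mfx$ solves \ref{weighting problem}, that is $\mfx \in S(w,\theta)$. Hence $\mfx$ belongs to the union over all $w \in \mathscr{W}_{p}$, and the inclusion follows.

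There is no genuine obstacle here: the corollary is essentially a bookkeeping combination of the two propositions. The only point worth emphasizing is the asymmetry between the two weight sets, and why the two containments cannot be merged into a single equality. Proposition \ref{prop:positive} requires the weights to be strictly positive, so the inner union is taken over $\mathscr{W}_{p}^{+}$; if a boundary weight vector with some $w^{l} = 0$ were allowed, an optimal solution of \ref{weighting problem} need not be Pareto optimal, which would break the first inclusion. Conversely, the weight vector produced by Proposition \ref{weight_convex} for a given Pareto optimal point may lie on the boundary of the simplex, so the outer union must range over all of $\mathscr{W}_{p}$; this is precisely why the right-hand side cannot be strengthened to a union over $\mathscr{W}_{p}^{+}$. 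These two observations together account for the nested chain of inclusions rather than a pair of equalities.
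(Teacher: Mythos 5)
Your proof is correct and follows exactly the route the paper takes: the paper derives the corollary directly from Propositions \ref{prop:positive} and \ref{weight_convex} without further argument, which is precisely your element-chasing combination of the two. Your added remark on why the two unions range over different weight sets ($\mathscr{W}_{p}^{+}$ versus $\mathscr{W}_{p}$) is accurate and consistent with the paper's setup.
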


In the following, we make a few assumptions to simplify our understanding, which are actually mild and appear frequently in the literature.

\begin{assumption}
	\label{assumption:convex_setting}
	Set $ \Theta$ is a convex compact set in $ \bR^{n_{\theta}} $.  There exists $D>0$ such that $\sup_{\theta\in\Theta}\norm{\theta} \leq D$. In addition, $\mathbf{f}(\mfx,\theta)$ and $\mathbf{g}(\mfx,\theta)$ are convex in $\mfx$ for each $ \theta \in \Theta $.
\end{assumption}

\subsection{Inverse multiobjective optimization}
Consider a learner who has access to decision makers' decisions, but does not have the full knowledge of the underlying decision making model. In the inverse multiobjective optimization model, the learner aims to learn the parameter $ \theta $ in \ref{mop} from observed noisy decisions only, and no information regarding decision makers' preferences over multiple objective functions is available. We denote $ \mfy $ the observed noisy decision that might carry measurement error or is generated with a bounded rationality of the decision maker, i.e., being suboptimal. Throughout the paper we assume that $\mfy$ is a random variable distributed according to an unknown distribution $\bP_{\mfy}$ supported on $\cY$.

Ideally, the learner would aim to learn $\theta$ by deriving parameter values that minimize the distance between the noisy decision and the predicted decision derived with those values. Without knowing decision makers' preferences over multiple objective functions, however, the learner cannot predict a desired decision even when $\theta$ is given. To address such a challenge, we begin with a discussion on the construction of an appropriate loss function for the inverse multiobjective optimization problem \cite{dong2018inferring,dong2020imop}.

\begin{definition}
	Given a noisy decision $\mfy$ and a hypothesis $\theta$, the loss function is defined as the minimum (squared) distance between $\mfy$ and the Pareto optimal set $X_{P}(\theta)$:
	\begin{align}
	\label{loss function}
	\tag*{loss lunction}
	l(\mfy,\theta) = \min_{\mfx \in X_{P}(\theta)} \norm{\mfy - \mfx}^{2}.
	\end{align}
\end{definition}

For a general \ref{mop}, however, there might exist no explicit way to characterize the Pareto optimal set $ X_{P}(\theta) $. Hence, an approximation approach to practically describe this set can be adopted. Following from Corollary \ref{coro:inclusion}, a sampling approach can be adopted to generate $w_{k}\in \mathscr{W}_{p}$ for each $k\in[K]$ and approximate $X_{P}(\theta)$ as $ \bigcup_{k\in [K]}  S(w_{k},\theta) $.
\begin{definition}
	Given a noisy decision $\mfy$ and a hypothesis $\theta$, the \textit{surrogate loss function} is defined as
	\begin{align}
	\label{surrogate loss function}
	\tag*{surrogate loss}
	l_{K}(\mfy,\theta) = \min_{\mfx \in \bigcup\limits_{k\in [K]}  S(w_{k},\theta)} \norm{\mfy - \mfx}^{2}.
	\end{align}
	By using binary variables,  this surrogate loss function can be converted into the \textit{surrogate loss problem}.
	\begin{align}
	\label{surrogate loss problem}
	\begin{array}{llll}
	l_{K}(\mfy,\theta) &= & \min\limits_{z_j\in \{0,1\}} \ \norm{\mfy - \sum\limits_{k\in[K]}z_{k}\mfx_{k}}^{2} \vspace{1mm}\\
	& \mbox{s.t.} & \mfx_{k} \in S(w_{k},\theta),  & \forall k \in [K], \vspace{1mm} \\
	&  & \sum\limits_{k\in[K]}z_{k} = 1.
	\end{array}
	\end{align}
\end{definition}

As $ \mfx_{k} \in S(w_{k},\theta) $  indicates that we sample data points from the Pareto optimal set $X_{P}(\theta)$, we refer to the constraint as the Pareto optimal constraint for each $ k \in [K] $. Constraint $\sum_{k\in[K]}z_{k} = 1$ ensures that exactly one of Pareto optimal solutions will be chosen to measure the distance from $\mfy$ to $ X_{P}(\theta) $. Hence, solving this optimization problem identifies some $w_k$ with $k\in [K]$ such that the corresponding Pareto optimal solution $S(w_{k},\theta)$ is closest to $\mfy$.

\begin{remark}
	It is guaranteed that no Pareto optimal solution will be excluded if all weight vectors in $\mathscr{W}_{p}$ are enumerated. As it is practically infeasible due to computational intractability, we can control the number of sampled weights $ K $ to achieve a desired tradeoff between the approximation accuracy and computational efficacy. Certainly, if the computational power is strong, we would suggest to draw a large number of weights evenly in $\mathscr{W}_{p}$ to avoid any bias. In practice, for general convex \ref{mop}, we evenly sample $ \{w_{k}\}_{k\in[K]} $ from $ \mathscr{W}_{p}^{+} $ to ensure that $ S(w_{k},\theta) \in X_{P}(\theta) $. If $ \mathbf{f}(\mfx,\theta) $ is strictly convex, we can evenly sample $ \{w_{k}\}_{k\in[K]} $ from $ \mathscr{W}_{p} $ as $ S(w_{k},\theta) \in X_{P}(\theta) $ by Proposition \ref{prop:positive}.
\end{remark}

We make the following assumptions as that in \cite{dong2020imop}, which are common in inverse optimization.
\begin{assumption}
	\label{assumption:set-assumption}
	\begin{description}
		\item[(a)] $X(\theta)$ is compact, and has a nonempty relative interior. There exists $B >0$ such that $\norm{\mfx} \leq B$ for all $\mfx \in X(\theta)$. The support $ \cY $ of the noisy decisions $ \mfy $ is contained within a ball of radius $ R $ almost surely, where $ R < \infty $. In other words, $ \bP(\norm{\mfy} \leq R) = 1 $.
		\item[(b)] Each function in $ \mathbf{f} $ is strongly convex on $ \bR^{n} $, that is for each $ l \in [p] $, $ \exists \lambda_{l} > 0 $, $ \forall \mfx, \mfy \in \bR^{n} $
		\begin{align*}
		\bigg(\nabla f_{l}(\mfy,\theta_{l}) - \nabla f_{l}(\mfx,\theta_{l})\bigg)^{T}(\mfy - \mfx) \geq \lambda_{l}\norm{\mfx - \mfy}^{2}.
		\end{align*}
	\end{description}
\end{assumption}
Regarding Assumption \ref{assumption:set-assumption}.(a),  assuming that the feasible region is closed and bounded is very common in inverse optimization. The finite support of the observations is needed since we do not hope outliers have too many impacts in our learning process. Let $ \lambda = \min_{l\in[p]}\{\lambda_{l}\} $. It follows that $ w^{T}\mff(\mfx,\theta) $ is strongly convex with parameter $ \lambda $ for each $ w \in \mathscr{W}_{p} $. Therefore, Assumptions \ref{assumption:set-assumption} (a) - (b) together ensure that $ S(w,\theta) $ is a single-valued set for each $w$ and $ \theta $.

Given observations $ \{\mfy_{i}\}_{i\in[N]} $ drawn i.i.d. according to the distribution $\bP_{\mfy}$, the inverse multiobjective optimization program is given in the following.
\begin{align}
\label{imop}
\tag*{IMOP}
\begin{array}{llll}
\min\limits_{\theta \in \Theta } & \frac{1}{N}\sum\limits_{i\in[N]}l_{K}(\mfy_{i},\theta).
\end{array}
\end{align}
Statistical properties and algorithm developments have been extensively investigated in \cite{dong2020imop}. As mentioned in \cite{dong2020imop}, \ref{imop} is a bilevel program with multiple lower level problems. One typical approach for the single level reduction of the bilevel program is by substituting the lower level problem with KKT conditions. For completeness, we reformulate \ref{imop} as follows
\begin{align*}
\begin{array}{llll}
\min\limits_{\theta, \mfx_{k},\mfu_{k},\vartheta_{i,k},z_{i,k}} & \frac{1}{N}\sum\limits_{i\in[N]}\sum\limits_{k\in[K]}\norm{\mfy_i - \vartheta_{i,k} }^{2} \vspace{1mm} \\
\text{s.t.}
& \left[\begin{array}{llll}
& \mathbf{g}(\mfx_{k}) \leq \zero, \;\; \mfu_{k} \geq \zero, \vspace{1mm} \\
& \mfu_{k}^{T}\mathbf{g}(\mfx_{k}) = 0,  \vspace{1mm} \\
& \nabla_{\mfx_{k}} w_{k}^{T}\mathbf{f}(\mfx_{k},\theta) + \mfu_{k} \cdot \nabla_{\mfx_{k}}\mathbf{g}(\mfx_{k}) = \zero, \vspace{1mm}
\end{array} \right] & \forall k\in[K], \vspace{1mm} \\
& 0 \leq \vartheta_{i,k} \leq M_{i,k}z_{i,k}, & i\in[N] , \forall k\in[K], \vspace{1mm} \\
& \mfx_{k} - M_{i,k}(1 - z_{i,k}) \leq \vartheta_{i,k} \leq \mfx_{k} & i\in[N] , \forall k\in[K], \vspace{1mm} \\
& \sum\limits_{k\in[K]}z_{k} = 1, \vspace{1mm} \\
& \theta \in \Theta, \mfx_{k} \in \bR^{n},\;\; \mfu_{k} \in \bR^{q}_{+}, \;\; \vartheta_{i,k} \in \bR^{n}, \;\; z_{i,k} \in \{0,1\}, & i\in[N] , \forall k\in[K],
\end{array}
\end{align*}
where $ \mfu_{k} $ is the vector of dual variables in the KKT condtions, and $ \vartheta_{i,k} $ and $ M_{i,k} $ are used to linearize the product of $ z_{i,k} $ and $\mfx_{k} $ for each $ i\in[N] $ and $ k \in [K] $.

\subsection{Wasserstein Ambiguity Set}
Recall that $ \mathcal{Y} \subseteq \bR^{n} $ is the observation space where the observed noisy decisions take values. Denote $ \mathscr{P}(\mathcal{Y}) $ the set of all probability distributions on $ \mathcal{Y} $. From now on, we let  the Wasserstein ambiguity set $ \mathcal{P} $ be the 1-Wasserstein ball of radius $ \epsilon $ centered at $ P_{0} $:
\begin{align}\label{wasserstein ambiguity set}
\mathcal{P} = \bB_{\epsilon}(P_{0}) := \left\{Q \in \mathscr{P}(\mathcal{Y}) : \mathcal{W}(Q,P_{0}) \leq \epsilon \right\},
\end{align}
where $ P_{0} $ is the nominal distribution on $ \mathcal{Y} $, $ \epsilon > 0 $ is the radius of the set, and $ \mathcal{W}(Q,P_{0}) $ is the wasserstein distance metric of order $ 1 $ defined as \cite{villani2008optimal,esfahani2018distributionally,gao2016distributionally}
\begin{align*}
\mathcal{W}(Q,P_{0}) = \inf_{\pi \in \Pi(Q,P_{0})}\int_{\mathcal{Y}\times\mathcal{Y}} \norm{z_{1} - z_{2}}\pi(d z_{1}, d z_{2}),
\end{align*}
where $ \Pi(Q,P_{0}) $ is the set of probability distributions on $ \mathcal{Y}\times\mathcal{Y} $ with marginals $ Q $ and $ P_{0} $.

\section{Wasserstein distributionally robust IMOP}
\label{sec:wimop}
In this section, we propose the Wasserstein distributionally robust IMOP, and show its equivalence to a semi-infinite program. Subsequently, we present an algorithm to handle the resulting reformulations, and show its convergence in finite steps. Finally, we establish the statistical performance guarantees for the distributionally robust IMOP.

Given observations $ \{\mfy_{i}\}_{i\in[N]} $ drawn i.i.d. according to the distribution $\bP_{\mfy}$, the corresponding distributionally robust program of \eqref{imop} equipped with the Wasserstein ambiguity set is constructed as follows
\begin{align}
\label{wrobust imop}
\tag*{WRO-IMOP}
\begin{array}{llll}
\min\limits_{\theta \in \Theta } \sup\limits_{Q \in \bB_{\epsilon}(\widehat{P}_{N})} \bE_{\mfy \sim Q}\left[l_{K}(\mfy,\theta)\right],
\end{array}
\end{align}
which minimizes the worst case expected loss over all the distributions in the Wasserstein ambiguity set. Here $ \bB_{\epsilon}(\widehat{P}_{N}) $ is defined in \eqref{wasserstein ambiguity set}, and $ \widehat{P}_{N} $ is the empirical distribution satisfying: $ \widehat{P}_{N}(\mfy_{i}) = 1/N, \forall i \in [N] $.
\subsection{Semi-infinite reformulations}
Problem \ref{wrobust imop} involves minimizing a supremum over infinitely many distributions, which makes it difficult to solve. In this section, we establish the reformulation of \ref{wrobust imop} into a semi-infinite program.

The performance of \ref{wrobust imop} depends on how the change of $ \theta $ affects the objective values. For $ \forall w \in \mathscr{W}_{p}, \theta_{1} \in \Theta, \theta_{2} \in \Theta $, we consider the following function
\[ h(\mfx,w,\theta_{1},\theta_{2}) = w^{T}\mff(\mfx, \theta_{1}) - w^{T}\mff(\mfx,\theta_{2}). \]

\begin{assumption}[Perturbation assumption]\label{lipschitz1}
	$ \exists \kappa >0 $, $ \forall w \in \mathscr{W}_{p} $, $ \forall \theta_{1}\neq\theta_{2} \in \Theta $, $ h(\cdot,w,\theta_{1},\theta_{2}) $ is Lipschitz continuous on $ \mathcal{Y} $:$ \forall \mfx, \mfy \in \mathcal{Y}, $
	\begin{align*}
	|h(\mfx,w,\theta_{1},\theta_{2}) - h(\mfy,w,\theta_{1},\theta_{2})| \leq \kappa\norm{\theta_{1} - \theta_{2}}\norm{\mfx - \mfy}.
	\end{align*}
\end{assumption}
Basically, this assumption requires that the objective functions will not change much when either the parameter $ \theta $ or the variable $ \mfx $ is perturbed. It actually holds in many common situations, including the multiobjective linear program (MLP) and multiobjective quadratic program (MQP). As a motivating example, we provide the value of $ \kappa $ for an MQP.
\begin{example}\label{example:lipschitz}
	Suppose that $ \mff(\mfx, \theta) =  \begin{pmatrix}
	\frac{1}{2}\mfx^{T} Q_{1}\mfx + \mfc_{1}^{T}\mfx \\
	\frac{1}{2}\mfx^{T} Q_{2}\mfx + \mfc_{2}^{T}\mfx
	\end{pmatrix}$, where $ \theta = (Q_{1},Q_{2},\mfc_{1},\mfc_{2}) $. Under Assumption \ref{assumption:set-assumption}, we know that $ \norm{\mfy} \leq R $. Then, $ h(\cdot,w,\theta_{1},\theta_{2}) $ is $ 2R\norm{\theta_{1} - \theta_{2}}$-Lipschitz continuous on $ \mathcal{Y} $. That is, we can set $ \kappa = 2R $.
\end{example}

Under the previous assumptions, we will establish several properties of the loss function $ l_{K}(\mfy,\theta) $, which are ensential for our reformulation for \ref{wrobust imop}.
\begin{lemma}\label{lemma: uniformly continuous} Under Assumptions \ref{assumption:convex_setting} - \ref{lipschitz1}, the loss function $ l_{K}(\mfy,\theta) $ has the following properties:
	\begin{itemize}
		\item[(a)] $\forall \mfy \in \mathcal{Y}, \theta \in \Theta, 0 \leq l_{K}(\mfy,\theta) \leq (B+R)^{2}$.
		\item[(b)] $ l_{K}(\mfy,\theta) $ is uniformly $ 2(B+R) $-Lipschitz continuous in $ \mfy $. That is, $ \forall \theta \in \Theta, \forall \mfy_{1},\mfy_{2} \in \mathcal{Y} $, we have
		\begin{align*}
		|l_{K}(\mfy_{1},\theta) - l_{K}(\mfy_{2},\theta)| \leq 2(B+R)\norm{\mfy_{1} - \mfy_{2}}.
		\end{align*}
		\item[(c)] $ l_{K}(\mfy,\theta) $ is uniformly $ \frac{4(B+R)\kappa}{\lambda} $-Lipschitz continuous in $ \theta $. That is, $ \forall \mfy \in \cY, \forall \theta_{1},\theta_{2} \in \Theta $, we have
		\begin{align*}
		|l_{K}(\mfy,\theta_{1}) - l_{K}(\mfy,\theta_{2})| \leq \frac{4(B+R)\kappa}{\lambda}\norm{\theta_{1} - \theta_{2}}.
		\end{align*}
	\end{itemize}
\end{lemma}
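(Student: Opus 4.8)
The plan is to establish the three properties separately: (a) and (b) follow directly from the geometry of the candidate set $\bigcup_{k\in[K]}S(w_{k},\theta)$, while (c) requires an auxiliary Lipschitz stability estimate for the optimal-solution map $\theta\mapsto S(w_{k},\theta)$. Throughout I would use that Assumption \ref{assumption:set-assumption} makes each $S(w_{k},\theta)$ single-valued, so the minima defining $l_{K}$ are attained at well-defined points.

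For part (a), I would observe that $\bigcup_{k\in[K]}S(w_{k},\theta)\subseteq X(\theta)$, so every candidate $\mfx$ in the minimization obeys $\norm{\mfx}\leq B$, while $\norm{\mfy}\leq R$ by Assumption \ref{assumption:set-assumption}(a). The lower bound is immediate since $l_{K}$ is a squared distance, and the triangle inequality gives $\norm{\mfy-\mfx}\leq R+B$ for every feasible $\mfx$; minimizing over $\mfx$ yields $l_{K}(\mfy,\theta)\leq(B+R)^{2}$. For part (b) I would use the standard ``swap the optimizers'' argument: fixing $\theta$ and letting $\mfx_{2}^{*}$ attain $l_{K}(\mfy_{2},\theta)$, the point $\mfx_{2}^{*}$ remains a feasible candidate for $\mfy_{1}$ because the candidate set does not depend on $\mfy$, so $l_{K}(\mfy_{1},\theta)-l_{K}(\mfy_{2},\theta)\leq\norm{\mfy_{1}-\mfx_{2}^{*}}^{2}-\norm{\mfy_{2}-\mfx_{2}^{*}}^{2}$. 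Expanding via $\norm{a}^{2}-\norm{b}^{2}=(a-b)^{T}(a+b)$ and applying Cauchy--Schwarz bounds this by $\norm{\mfy_{1}-\mfy_{2}}\cdot\norm{\mfy_{1}+\mfy_{2}-2\mfx_{2}^{*}}\leq 2(B+R)\norm{\mfy_{1}-\mfy_{2}}$, using $\norm{\mfy_{i}}\leq R$ and $\norm{\mfx_{2}^{*}}\leq B$. Swapping $\mfy_{1}$ and $\mfy_{2}$ gives the reverse inequality and hence the claimed $2(B+R)$-Lipschitz bound.

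Part (c) is the crux, and I expect the stability of the optimizers to be the main obstacle, since perturbing $\theta$ moves the very points $S(w_{k},\theta)$ over which the loss is minimized. The key lemma I would prove is that each optimizer is $\tfrac{2\kappa}{\lambda}$-Lipschitz in $\theta$. Writing $x_{1}=S(w_{k},\theta_{1})$, $x_{2}=S(w_{k},\theta_{2})$ and $\phi_{i}(\cdot)=w_{k}^{T}\mff(\cdot,\theta_{i})$, I would combine the $\lambda$-strong convexity of $\phi_{1}$ (from Assumption \ref{assumption:set-assumption}(b), which makes $w_{k}^{T}\mff$ strongly convex with parameter $\lambda$) with the optimality of $x_{2}$ for $\phi_{2}$. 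Strong convexity and optimality of $x_{1}$ give $\tfrac{\lambda}{2}\norm{x_{1}-x_{2}}^{2}\leq\phi_{1}(x_{2})-\phi_{1}(x_{1})$, and inserting the inequality $\phi_{2}(x_{2})-\phi_{2}(x_{1})\leq 0$ rewrites the right-hand side as $h(x_{2},w_{k},\theta_{1},\theta_{2})-h(x_{1},w_{k},\theta_{1},\theta_{2})$. Assumption \ref{lipschitz1} then bounds this by $\kappa\norm{\theta_{1}-\theta_{2}}\norm{x_{1}-x_{2}}$, yielding $\norm{S(w_{k},\theta_{1})-S(w_{k},\theta_{2})}\leq\tfrac{2\kappa}{\lambda}\norm{\theta_{1}-\theta_{2}}$.

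With this stability estimate in hand, I would repeat the swap-the-optimizers computation of part (b), now perturbing $\theta$ rather than $\mfy$: letting $k^{*}$ attain $l_{K}(\mfy,\theta_{2})$, the difference $l_{K}(\mfy,\theta_{1})-l_{K}(\mfy,\theta_{2})$ is at most $\norm{\mfy-S(w_{k^{*}},\theta_{1})}^{2}-\norm{\mfy-S(w_{k^{*}},\theta_{2})}^{2}$, and the same difference-of-squares and Cauchy--Schwarz estimate---with $\norm{S(w_{k^{*}},\theta_{1})-S(w_{k^{*}},\theta_{2})}\leq\tfrac{2\kappa}{\lambda}\norm{\theta_{1}-\theta_{2}}$ now playing the role of $\norm{\mfy_{1}-\mfy_{2}}$---produces the constant $2(B+R)\cdot\tfrac{2\kappa}{\lambda}=\tfrac{4(B+R)\kappa}{\lambda}$. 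The delicate points are the implicit requirement that $x_{1}$ and $x_{2}$ be feasible for both $\phi_{1}$ and $\phi_{2}$ (transparent when $\theta$ enters only the objective, as in Example \ref{example:lipschitz}), and the choice of the one-sided strong-convexity inequality, which is precisely what yields the stated constant $\tfrac{4(B+R)\kappa}{\lambda}$ rather than the tighter $\tfrac{2(B+R)\kappa}{\lambda}$ one would obtain by symmetrizing the two strong-convexity bounds.
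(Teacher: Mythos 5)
Your proposal is correct, and for parts (a) and (b) and the outer estimate in (c) it coincides with the paper's proof: the same direct bound for (a), and the same swap-the-optimizer, difference-of-squares, Cauchy--Schwarz computation giving $2(B+R)\norm{\mfy_{1}-\mfy_{2}}$ in (b) and $2(B+R)\norm{S(w,\theta_{1})-S(w,\theta_{2})}$ in (c). The one genuine divergence is the key stability estimate $\norm{S(w,\theta_{1})-S(w,\theta_{2})}\leq \tfrac{2\kappa}{\lambda}\norm{\theta_{1}-\theta_{2}}$: the paper obtains it by citing Proposition 6.1 of \cite{bonnans1998optimization} as a black box, whereas you derive it from first principles by combining the one-sided strong-convexity/growth inequality $\tfrac{\lambda}{2}\norm{x_{1}-x_{2}}^{2}\leq \phi_{1}(x_{2})-\phi_{1}(x_{1})$ (valid by first-order optimality of $x_{1}$ over the convex feasible set) with the optimality of $x_{2}$ and the Lipschitz bound of Assumption \ref{lipschitz1} on $h$. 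Your elementary route buys self-containedness and, usefully, surfaces two things the citation hides: first, the argument needs $x_{1}$ and $x_{2}$ to be feasible for both problems, i.e.\ it is clean only when $\theta$ enters the objective and not $X(\theta)$ --- a restriction the paper inherits silently through the cited proposition (and note that both you and the paper apply Assumption \ref{lipschitz1}, stated on $\mathcal{Y}$, at points of $X(\theta)$, which tacitly requires the feasible region to sit inside the support or the assumption to extend to it); second, your closing observation is accurate that symmetrizing the two strong-convexity inequalities would yield $\norm{x_{1}-x_{2}}\leq\tfrac{\kappa}{\lambda}\norm{\theta_{1}-\theta_{2}}$ and hence the sharper constant $\tfrac{2(B+R)\kappa}{\lambda}$, a factor-of-two improvement over the paper's stated bound, which corresponds to the non-symmetrized growth-condition argument. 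What the paper's citation buys in exchange is brevity and a pointer to general perturbation theory covering weaker growth conditions than strong convexity.
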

(a) and (b) of this lemma are built upon direct analysis of the loss function $ l_{K}(\mfy,\theta) $. Proof of (c) is much more involved and needs the key observation that the perturbation of $ S(w,\theta) $ due to $ \theta $ is bounded by the perturbation of $ \theta $ by applying Proposition 6.1 in \cite{bonnans1998optimization}. Proof details are given in the Appendix.

Let
\begin{align*}
\mathcal{V}:= \bigg\{ \mfv\in\bR^{N+1}: & V_1 \leq v_i\leq (m+1)V_2 - m V_1, \forall i\in[N], 0 \leq v_{N+1}\leq (V_2-V_1)/\epsilon \bigg\}.
\end{align*}
where $ V_{1} $ and $ V_{2} $ are the lower and upper bounds for the loss function $ l_{K}(\mfy,\theta) $, respectively. By part (a) of Lemma \ref{lemma: uniformly continuous}, we will set $ V_{1} = 0 $, and $ V_{2} = (B+R)^{2} $ throughout the remainder of the paper.

We are now ready to state the main result in this paper.
\begin{theorem}[Semi-infinite Reformulation]\label{theorem:tractable reformulation}
	Under Assumptions \ref{assumption:convex_setting} - \ref{lipschitz1}, \ref{wrobust imop} is equivalent to the following semi-infinite program:
	\begin{align}
	\label{imop reformulation}
	\begin{array}{llll}
	\min\limits_{\theta,\mfv} & \epsilon \cdot v_{N+1} + \frac{1}{N}\sum\limits_{i\in[N]}v_{i} \vspace{1mm}\\
	\;s.t. & \sup\limits_{\widetilde{\mfy} \in \mathcal{Y}}\left(l_{K}(\widetilde{\mfy},\theta) - v_{N+1}\cdot \norm{\widetilde{\mfy} - \mfy_{i}} \right) \leq v_{i}, & \forall   i\in[N], \vspace{1mm} \\
	& \theta \in \Theta, \mfv \in \mathcal{V}
	\end{array}
	\end{align}
\end{theorem}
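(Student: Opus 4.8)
The plan is to obtain \eqref{imop reformulation} as a direct consequence of the strong Lagrangian duality theorem for Wasserstein distributionally robust optimization (cf.\ \cite{gao2016distributionally,esfahani2018distributionally}). For each fixed $\theta\in\Theta$ the inner problem $\sup_{Q\in\bB_{\epsilon}(\widehat{P}_{N})}\bE_{\mfy\sim Q}[l_{K}(\mfy,\theta)]$ is an infinite-dimensional linear program over couplings $\pi\in\Pi(Q,\widehat{P}_{N})$ with a single budget constraint $\mathcal{W}(Q,\widehat{P}_{N})\le\epsilon$; dualizing that one constraint with a scalar multiplier collapses the worst-case expectation into a finite sum of pointwise maximizations. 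The qualifying hypotheses are in place: by Assumption~\ref{assumption:set-assumption}(a) the support $\cY$ is contained in a closed ball of radius $R$, and by parts (a)--(b) of Lemma~\ref{lemma: uniformly continuous} the integrand $l_{K}(\cdot,\theta)$ is bounded and Lipschitz, hence upper semicontinuous, so the duality gap is zero. This yields, for every $\theta$,
\[
\sup_{Q\in\bB_{\epsilon}(\widehat{P}_{N})}\bE_{\mfy\sim Q}\!\left[l_{K}(\mfy,\theta)\right]
=\inf_{\lambda\ge 0}\left\{\lambda\epsilon+\frac{1}{N}\sum_{i\in[N]}\sup_{\widetilde{\mfy}\in\cY}\bigl(l_{K}(\widetilde{\mfy},\theta)-\lambda\norm{\widetilde{\mfy}-\mfy_{i}}\bigr)\right\}.
\]

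Next I would insert this identity into the outer $\min_{\theta\in\Theta}$ and merge the two infima into a joint minimization over $(\theta,\lambda)$. Renaming $\lambda=:v_{N+1}$ and introducing epigraph variables $v_{1},\dots,v_{N}$ subject to $v_{i}\ge\sup_{\widetilde{\mfy}\in\cY}\bigl(l_{K}(\widetilde{\mfy},\theta)-v_{N+1}\norm{\widetilde{\mfy}-\mfy_{i}}\bigr)$ converts each inner supremum into the semi-infinite constraint appearing in \eqref{imop reformulation}; since the objective $\epsilon v_{N+1}+\frac1N\sum_i v_i$ is nondecreasing in each $v_{i}$, these constraints are active at optimality and the lifted program shares the optimal value with the dual form above.

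It then remains to argue that restricting $\mfv$ to the box $\mathcal{V}$ incurs no loss. Using Lemma~\ref{lemma: uniformly continuous}(a), for which we set $V_{1}=0$ and $V_{2}=(B+R)^{2}$, the feasible $v_{i}$ obey $v_{i}\ge l_{K}(\mfy_{i},\theta)\ge V_{1}$ (take $\widetilde{\mfy}=\mfy_{i}$) and, at optimality, $v_{i}\le\sup_{\widetilde{\mfy}}l_{K}(\widetilde{\mfy},\theta)\le V_{2}$ since the subtracted term is nonnegative, so $v_i$ lies in the stated interval. For $v_{N+1}$ I would compare with the choice $v_{N+1}=0$, whose objective value is at most $V_{2}$; since for any $v_{N+1}\ge 0$ the inner supremum is at least $l_{K}(\mfy_{i},\theta)\ge V_{1}$, the objective is bounded below by $\epsilon v_{N+1}+V_{1}$, so any $v_{N+1}>(V_{2}-V_{1})/\epsilon$ is strictly dominated. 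Hence an optimizer with $0\le v_{N+1}\le(V_{2}-V_{1})/\epsilon$ exists, and adjoining the box constraints $\mfv\in\mathcal{V}$ preserves the optimal value, establishing the equivalence.

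The main obstacle is the rigorous justification of the zero-gap strong duality and the attainment of the dual multiplier: one must verify measurability and upper semicontinuity of $l_{K}(\cdot,\theta)$ (supplied by Lemma~\ref{lemma: uniformly continuous}) and that $\cY$ satisfies the regularity required by \cite{gao2016distributionally,esfahani2018distributionally}. A subtlety worth flagging is that $l_{K}(\cdot,\theta)$ is \emph{not} concave in $\widetilde{\mfy}$ (it is a minimum over a union of singletons), so the inner maximization $\sup_{\widetilde{\mfy}\in\cY}(l_{K}(\widetilde{\mfy},\theta)-v_{N+1}\norm{\widetilde{\mfy}-\mfy_{i}})$ is a nonconvex program; this does not affect the validity of the reformulation, which only requires upper semicontinuity of the integrand, but it is precisely what forces the nontrivial algorithmic treatment developed later.
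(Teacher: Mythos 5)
Your proposal is correct, but it takes a genuinely different route from the paper. The paper's proof is a one-paragraph verification-and-citation: it checks that $\Theta$ and $\mathcal{Y}$ are compact and that $l_{K}$ is bounded and Lipschitz in both arguments (Lemma~\ref{lemma: uniformly continuous}), and then invokes Corollary~3.8 of \cite{luo2017decomposition}, which delivers the semi-infinite program \eqref{imop reformulation} --- including the box $\mathcal{V}$ with its constants --- as a black box. You instead rebuild the result from first principles: strong Kantorovich/Lagrangian duality for the inner supremum over the $1$-Wasserstein ball (valid here because $\mathcal{Y}$ is compact and $l_{K}(\cdot,\theta)$ is bounded and continuous; your observation that no concavity in $\widetilde{\mfy}$ is needed matches the paper's own remark following the theorem), followed by the epigraph lifting with $v_{N+1}$ playing the role of the dual multiplier $\lambda$, and an explicit argument that adjoining $\mfv\in\mathcal{V}$ is lossless. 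Your box argument is sound and in fact yields the \emph{tighter} bounds $V_{1}\leq v_{i}\leq V_{2}$ and $0\leq v_{N+1}\leq (V_{2}-V_{1})/\epsilon$; since $(m+1)V_{2}-mV_{1}\geq V_{2}$ for $m\geq 0$, these imply membership in the paper's $\mathcal{V}$ (whose constant $m$ is inherited, without definition, from the more general statement in \cite{luo2017decomposition}), so the two formulations agree. What the paper's route buys is brevity; what yours buys is self-containedness and transparency --- the dual-variable interpretation of $v_{N+1}$, explicit evidence that the box constraints are never binding at optimality, and a clean isolation of exactly where each part of Lemma~\ref{lemma: uniformly continuous} enters (parts (a)--(b) for the zero duality gap, part (c) only for continuity in $\theta$). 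One small point worth closing to get full equivalence of minimizers rather than just equality of optimal values: note that the minimum in \eqref{imop reformulation} is attained, since $\Theta\times\mathcal{V}$ is compact and the constraint function $\sup_{\widetilde{\mfy}\in\mathcal{Y}}\left(l_{K}(\widetilde{\mfy},\theta)-v_{N+1}\norm{\widetilde{\mfy}-\mfy_{i}}\right)$ is jointly continuous in $(\theta,\mfv)$ by Lemma~\ref{lemma: uniformly continuous}(b)--(c); this also disposes of the attainment concern you flagged for the dual multiplier.
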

\begin{proof}
	Under Assumption \ref{assumption:convex_setting}, we know that $ \Theta $ is compact. Similarly, $ \mathcal{Y} $ is also compact under Assumption \ref{assumption:set-assumption} (a). By lemma \ref{lemma: uniformly continuous} (a), $\forall \mfy \in \mathcal{Y}, \theta \in \Theta, 0 \leq l_{K}(\mfy,\theta) \leq (B+R)^{2}$, and thus $ l_{K}(\mfy,\theta) $ is bounded. In addition, by lemma \ref{lemma: uniformly continuous} (b), $ l_{K}(\mfy,\theta) $ is continuous in $ \theta $ for any $ \mfy \in \mathcal{Y} $. Finally, by Lemma \ref{lemma: uniformly continuous} (c), $ l_{K}(\mfy,\theta) $ is uniformly $ \frac{4(B+R)\kappa}{\lambda} $-Lipschitz continuous in $ \mfy $. Hence, applying Corollary 3.8 of \cite{luo2017decomposition} yields the result.
\end{proof}
\begin{remark}
	The establishment of Theorem \ref{theorem:tractable reformulation} relies on those properties of the loss function $ l_{K}(\mfy,\theta) $ stated in Lemma \ref{lemma: uniformly continuous}. Although $ l_{K}(\mfy,\theta) $ might not be convex in $ \theta $ or $ \mfy $, these properties ensure that strong (Kantorovich) duality holds for the inner problem of \ref{wrobust imop}.
\end{remark}

Next, we will discuss how to incorporate the explicit form of $ l_{K}(\mfy,\theta) $ into the constraints of \eqref{imop reformulation}. For each $ i \in [N] $, constraints in \eqref{imop reformulation} is equivalent to: $ \forall \widetilde{\mfy} \in \mathcal{Y} $,
\begin{align}
\label{reformulation of constraints}
\begin{array}{llll}
\norm{\widetilde{\mfy} - \mfx_{k}}^{2} - v_{N+1}\cdot \norm{\widetilde{\mfy} - \mfy_{i}} - v_{i} \leq Mz_{ik}, \vspace{1mm} \\
\sum\limits_{k\in[K]}z_{ik} = K - 1,
\end{array}
\end{align}
where the additional constraint $ \sum_{k\in[K]}z_{ik} = K - $1 is imposed to ensure that $ \norm{\widetilde{\mfy} - \mfx_{k}}^{2} - v_{i} - v_{N+1}\cdot \norm{\widetilde{\mfy} - \mfy_{i}} \leq 0 $ for at least one $ k \in [K]$. $ M $ is an uniform upper bound for the left-hand side of the first constraint in \eqref{reformulation of constraints}. An appropriate $ M $ could be $ (B+R)^{2} $, since $ \forall i\in[N], k \in [K] $,
\begin{align*}
& \norm{\widetilde{\mfy} - \mfx_{k}}^{2} - v_{N+1}\cdot \norm{\widetilde{\mfy} - \mfy_{i}} - v_{i} \leq \norm{\widetilde{\mfy} - \mfx_{k}}^{2} \leq (B+R)^{2}.
\end{align*}

One can verify that \eqref{reformulation of constraints} is indeed equivalent to the first set of constraints in \eqref{imop reformulation} without much effort.

\begin{remark}
	We note that the semi-infinite reformulation in Theorem \ref{theorem:tractable reformulation} might still be valid if some assumption is not satisfied. Consider, for example, one of the objective functions is known to be strongly convex and the decision makers always has a positive preference for it (i.e., the weight vector $ w \in \mathscr{W}_{p}^{+} $).
\end{remark}

\subsection{Algorithm and analysis of convergence}
Theorem \ref{theorem:tractable reformulation} shows that the Wasserstein distributionally inverse multiobjective program \ref{wrobust imop} is equivalent to the semi-infinite program \eqref{imop reformulation}. Now, any existing method for solving the general semi-infinite program can be employed to solve \eqref{imop reformulation}. In particular, we are interested in using exchange methods \cite{hettich1993semi}, since our proposed algorithm inherits the spirit of these methods when applied to solving the minmax problem. The basic idea is to approximate and relax the infinite set of constraints in \eqref{imop reformulation} with a sequence of finite sets of constraints. Iteratively, new constraints are added to the previous set of constraints to tighten the relaxation by solving a maximum constraint violation problem. This is repeated until certain stopping criterion is satisfied.

Next, we discuss how to construct the finite problem, i.e., the master problem, that is a relaxation to \eqref{imop reformulation}. 

Let $ \widetilde{\mathcal{Y}}_{i} = \{\widetilde{\mfy}_{i1},\cdots,\widetilde{\mfy}_{iJ_{i}}\} \subseteq \mathcal{Y}, \forall i \in [N] $ be a collection of finite subsets of $ \mathcal{Y} $, where each subset has $ J_{i} $ samples. Then, the associated finite problem of \eqref{imop reformulation} is
\begin{align}
\label{imop reformulation 2}
\begin{array}{llll}
\min\limits_{\theta,\mfv} & \epsilon \cdot v_{N+1} + \frac{1}{N}\sum\limits_{i\in[N]}v_{i}, \vspace{1mm}\\
s.t. & l_{K}(\widetilde{\mfy}_{ij},\theta) - v_{N+1}\cdot \norm{\widetilde{\mfy}_{ij} - \mfy_{i}} \leq v_{i}, & \forall j \in [J_{i}], i\in[N], \vspace{1mm} \\
&\theta \in \Theta, v \in \mathcal{V}.
\end{array}
\end{align}
By the same arguments for the transformation from constraints in \eqref{imop reformulation} to \eqref{reformulation of constraints}, constraints in \eqref{imop reformulation 2} are equivalent to
\begin{align*}
\begin{array}{llll}
\norm{\widetilde{\mfy}_{ij} - \mfx_{k}}^{2} - v_{N+1}\cdot \norm{\widetilde{\mfy}_{ij} - \mfy_{i}} - v_{i} \leq Mz_{ijk},\vspace{1mm} \\
\sum\limits_{k\in[K]}z_{ijk} = K - 1, \;\; \forall i\in[N], j \in [J_{i}].
\end{array}
\end{align*}
\begin{proposition}[Finite problem]
	Using the above transformation, \eqref{imop reformulation 2} can be further cast into the following finite problem with finitely many constraints:
	\begin{align}
	\label{imop reformulation 3}
	\begin{array}{llll}
	\min\limits_{\theta,\mfv,\mfx_{k},z_{ijk}} & \epsilon \cdot v_{N+1} + \frac{1}{N}\sum\limits_{i\in[N]}v_{i}, \vspace{1mm}\\
	\;s.t. & \norm{\widetilde{\mfy}_{ij} - \mfx_{k}}^{2} - v_{N+1}\cdot \norm{\widetilde{\mfy}_{ij} - \mfy_{i}} - v_{i} \leq Mz_{ijk},  \vspace{1mm} \\
	& \mfx_{k} \in S(w_{k},\theta), \vspace{1mm}\\
	& \sum\limits_{k\in[K]}z_{ijk} = K - 1, \vspace{1mm} \\
	& \theta \in \Theta, v \in \mathcal{V}, z_{ijk} \in \{0,1\}, \forall i\in[N], j \in [J_{i}], k \in [K].
	\end{array}
	\end{align}
\end{proposition}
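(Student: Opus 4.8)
The plan is to prove that \eqref{imop reformulation 2} and \eqref{imop reformulation 3} have the same optimal value through a value-preserving correspondence of their feasible sets, obtained by unpacking the definition of the surrogate loss $ l_{K} $ inside each constraint and then encoding the resulting minimum over $ k $ by a standard big-$ M $ disjunctive formulation. First I would substitute the surrogate loss definition \ref{surrogate loss function}: for fixed $ i\in[N] $ and $ j\in[J_{i}] $, the constraint $ l_{K}(\widetilde{\mfy}_{ij},\theta) - v_{N+1}\norm{\widetilde{\mfy}_{ij} - \mfy_{i}} \leq v_{i} $ becomes $ \min_{k\in[K]}\norm{\widetilde{\mfy}_{ij} - \mfx_{k}}^{2} - v_{N+1}\norm{\widetilde{\mfy}_{ij} - \mfy_{i}} - v_{i} \leq 0 $ with $ \mfx_{k}\in S(w_{k},\theta) $. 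Since a minimum over a finite index set is nonpositive if and only if at least one of its terms is nonpositive, this is equivalent to the disjunction that there exists $ k\in[K] $ with $ \norm{\widetilde{\mfy}_{ij} - \mfx_{k}}^{2} - v_{N+1}\norm{\widetilde{\mfy}_{ij} - \mfy_{i}} - v_{i} \leq 0 $.

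Next I would encode this disjunction with the binaries $ z_{ijk}\in\{0,1\} $ exactly as in \eqref{imop reformulation 3}, namely $ \norm{\widetilde{\mfy}_{ij} - \mfx_{k}}^{2} - v_{N+1}\norm{\widetilde{\mfy}_{ij} - \mfy_{i}} - v_{i} \leq M z_{ijk} $ together with $ \sum_{k\in[K]} z_{ijk} = K-1 $. Because the $ z_{ijk} $ are binary and sum to $ K-1 $, exactly one of them equals $ 0 $; for that index the inequality reduces to the required nonpositivity, whereas for the remaining $ K-1 $ indices $ z_{ijk}=1 $ and the inequality is slack provided $ M $ is a valid upper bound. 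I would then verify both inclusions: given feasible $ (\theta,v) $ of \eqref{imop reformulation 2}, choosing the minimizing index $ k^{*} $ and setting $ z_{ijk^{*}}=0 $ and $ z_{ijk}=1 $ otherwise yields a feasible point of \eqref{imop reformulation 3}; conversely, any feasible $ (\theta,v,\mfx_{k},z_{ijk}) $ of \eqref{imop reformulation 3} has some $ k $ with $ z_{ijk}=0 $, so the minimum over $ k $ is nonpositive and $ (\theta,v) $ is feasible in \eqref{imop reformulation 2}. Since both objectives equal $ \epsilon v_{N+1} + \frac{1}{N}\sum_{i\in[N]} v_{i} $ and depend only on $ (\theta,v) $, the optimal values coincide.

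The validity of $ M $ is inherited from the bound derived just before the proposition: $ \norm{\mfx_{k}}\leq B $ by Assumption~\ref{assumption:set-assumption}(a) and $ \norm{\widetilde{\mfy}_{ij}}\leq R $ give $ \norm{\widetilde{\mfy}_{ij} - \mfx_{k}}^{2}\leq (B+R)^{2} $, and since $ v_{N+1}\geq 0 $ and $ v_{i}\geq V_{1}=0 $ on $ \mathcal{V} $, subtracting the two nonnegative terms only decreases the left-hand side; hence $ M=(B+R)^{2} $ dominates it at every feasible point. Finiteness is then immediate, since the index set $ \{(i,j,k): i\in[N],\, j\in[J_{i}],\, k\in[K]\} $ is finite and the constraint $ \mfx_{k}\in S(w_{k},\theta) $ is carried over verbatim. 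The only step requiring care — rather than any genuine difficulty — is the correctness of the cardinality-plus-big-$ M $ encoding: I must confirm that $ \sum_{k} z_{ijk}=K-1 $ forces at least one active index and that the choice $ M=(B+R)^{2} $ never certifies a spurious feasible point, both of which follow from the bounds above.
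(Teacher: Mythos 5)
Your proposal is correct and follows essentially the same route as the paper, which establishes the proposition by the big-$M$ transformation discussed just before it: unpacking $l_{K}$ via \ref{surrogate loss function}, encoding the resulting finite disjunction with binaries satisfying $\sum_{k\in[K]} z_{ijk}=K-1$, and justifying $M=(B+R)^{2}$ from $\norm{\widetilde{\mfy}_{ij}-\mfx_{k}}\leq B+R$ together with $v_{N+1}\geq 0$ and $v_{i}\geq V_{1}=0$ on $\mathcal{V}$. Your explicit verification of the two feasible-set inclusions and of the value-preserving projection onto $(\theta,\mfv)$ simply fills in the check the paper leaves to the reader (``one can verify \dots without much effort''), so there is nothing to correct.
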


Similar to \ref{imop}, the single level reformulation for \eqref{imop reformulation 3} by applying KKT conditions is as follows
\begin{align*}
\begin{array}{llll}
\min\limits_{\theta,\mfv,\mfx_{k},z_{ijk}} & \epsilon \cdot v_{N+1} + \frac{1}{N}\sum\limits_{i\in[N]}v_{i}, \vspace{1mm}\\
\;s.t. & \norm{\widetilde{\mfy}_{ij} - \mfx_{k}}^{2} - v_{N+1}\cdot \norm{\widetilde{\mfy}_{ij} - \mfy_{i}} - v_{i} \leq Mz_{ijk},  \vspace{1mm} \\
& \left[\begin{array}{llll}
& \mathbf{g}(\mfx_{k}) \leq \zero, \;\; \mfu_{k} \geq \zero \vspace{1mm} \\
& \mfu_{k}^{T}\mathbf{g}(\mfx_{k}) = 0 \vspace{1mm} \\
& \nabla_{\mfx_{k}} w_{k}^{T}\mathbf{f}(\mfx_{k},\theta) + \mfu_{k} \cdot \nabla_{\mfx_{k}}\mathbf{g}(\mfx_{k}) = \zero \vspace{1mm}
\end{array} \right] & \forall k\in[K] \vspace{1mm} \\
& \sum\limits_{k\in[K]}z_{ijk} = K - 1, \vspace{1mm} \\
& \theta \in \Theta, v \in \mathcal{V}, \;\;\mfu_{k} \in \bR^{q}_{+}, \;\;  z_{ijk} \in \{0,1\}, \forall i\in[N], j \in [J_{i}], k \in [K].
\end{array}
\end{align*}

At each iteration, new constraints are determined to add to the previous set of constraints in \eqref{imop reformulation 3} by solving the following \textbf{Maximum constraint violation problem:}, i.e., the subproblem: $ \forall i \in [N], $
\begin{align}\label{maximum constraint violation}
CV_{i} = \max\limits_{\widetilde{\mfy} \in \mathcal{Y}}l_{K}(\widetilde{\mfy},\widehat{\theta}) - \widehat{v}_{N+1}\cdot \norm{\widetilde{\mfy} - \mfy_{i}}  - \widehat{v}_{i}.
\end{align}
Denote $ \widetilde{\mfy}_{i} $ the optimal solution of \eqref{maximum constraint violation} for each $ i \in [N] $. Whenever we find that $ CV_{i} > 0 $, we append $ \widetilde{\mfy}_{i} $ to $ \widetilde{\mathcal{Y}}_{i} $. As a result, we tighten our approximation for the infinite set of constraints in \eqref{imop reformulation} by imposing the additional constraint $ l_{K}(\widetilde{\mfy}_{i},\widehat{\theta}) - \widehat{v}_{N+1}\cdot \norm{\widetilde{\mfy}_{i} - \mfy_{i}}  - \widehat{v}_{i} \leq 0 $ in the next iteration.

With the above assumptions and analysis, we now present our method to solve \ref{wrobust imop} in Algorithm \ref{alg:algorithm1} shown in Figure \ref{fig:scheme}.
\begin{algorithm}
	\caption{Wasserstein Distributionally Robust IMOP}
	\label{alg:algorithm1}
	\begin{algorithmic}[1]
		\STATE{\bfseries Input:} noisy decisions $\{\mfy_{i}\}_{i \in [N]}$, weights $\{w_{k}\}_{k\in K}$, radius $ \epsilon $ of Wasserstein ball, and stopping tolerance $ \delta $
		\STATE{\bfseries Initialize} $\widetilde{\mathcal{Y}}_i\gets\emptyset, \forall i \in [N]$
		\REPEAT
		\STATE solve the master problem in \eqref{imop reformulation 3} with $ \widetilde{\mathcal{Y}}_{i}, \forall i \in [N] $, and return an optimal solution $ (\widehat{\theta},\widehat{\mfv}) $
		\FOR{$ i=1,\ldots,N $}
		\STATE solve the subproblem, i.e., the maximum constraint violation problem \eqref{maximum constraint violation}
		\STATE \textbf{if} {$ CV_i > 0 $} \textbf{then} let $ \widetilde{\mathcal{Y}}_{i} \leftarrow \widetilde{\mathcal{Y}}_{i} \cup \{\widetilde{\mfy}_{i}\} $ \textbf{end if}
		\ENDFOR
		\UNTIL{$ \max_{i\in[N]}CV_{i} \leq \delta $}
		\STATE{\bfseries Output:} a $ \delta$-optimal solution $ \widehat{\theta}_{N} $ of \eqref{imop reformulation}
	\end{algorithmic}
\end{algorithm}
\begin{remark}
	In Step 6, the maximum constraint violation problem can be solved exactly and efficiently by invoking solver such as Baron \cite{sahinidis1996baron}. Nevertheless, it can also be solved  by decomposing into $ K $ subproblems through substituting $ \{\mfx_{k}\}_{k\in[K]} $ into $l_{K}(\widetilde{\mfy},\widehat{\theta})$. These $ K $ different subproblems can be solved independently and in parallel, allowing a linear speedup of Step 6. 
\end{remark}
\begin{figure}[ht]
	\centering
	\includegraphics[width=0.9\linewidth]{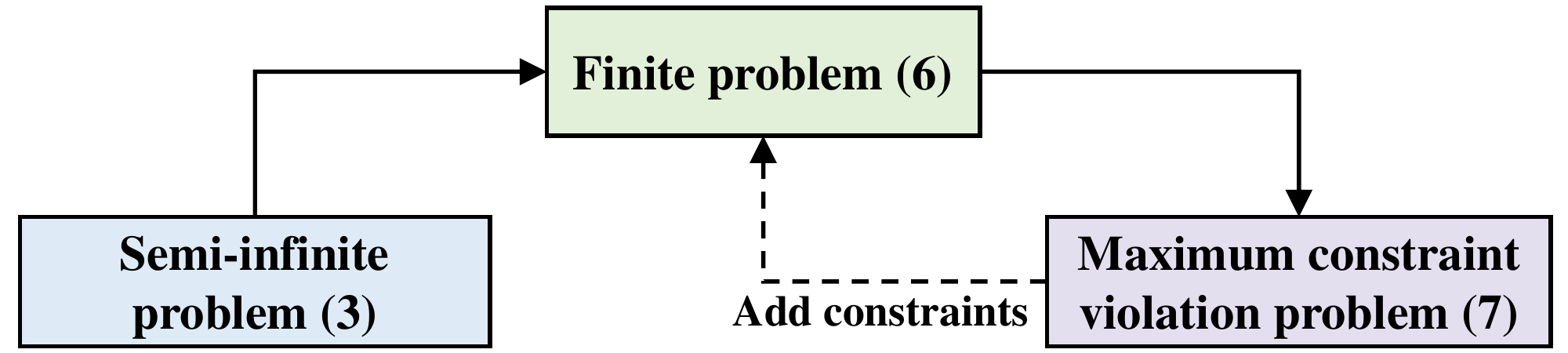}
	\caption{General scheme of Algorithm \ref{alg:algorithm1}.}
	\label{fig:scheme}
\end{figure}

For completeness, we give the convergence proof of Algorithm \ref{alg:algorithm1} in the following theorem.
\begin{theorem}[Finite convergence]\label{theorem:convergence}
	Under Assumptions \ref{assumption:convex_setting} - \ref{lipschitz1}, Algorithm \ref{alg:algorithm1} converges within $ (\frac{GR_{0}}{\delta} + 1)^{n_{\theta} + N + 1} $ iterations. Here,
	\begin{align*}
	G  & = (1 + 2R +  \frac{4(B+R)\kappa}{\lambda}),\\
	R_{0} & = \sqrt{D^{2} + N\big((m+1)V_2 - m V_1\big)^{2} + \left(\frac{V_2-V_1}{\epsilon}\right)^{2}}.
	\end{align*}
\end{theorem}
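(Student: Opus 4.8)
The plan is to treat Algorithm \ref{alg:algorithm1} as a cutting-plane (exchange) method and to argue that the master-problem solutions it generates form a set of points that are pairwise well separated yet all lie in a fixed bounded region; since only finitely many such points can coexist, the algorithm must stop. The decision variable of every master problem \eqref{imop reformulation 3} is the pair $(\theta,\mfv)\in\Theta\times\mathcal{V}\subseteq\bR^{n_\theta+N+1}$, so the exponent $n_\theta+N+1$ is exactly the ambient dimension of the iterates. Let $(\theta^{(r)},\mfv^{(r)})$ denote the solution returned in Step 4 at iteration $r$. The whole proof reduces to a separation estimate plus a packing count.

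The heart of the argument is the estimate that for every $r<s$ the iterates satisfy $\norm{(\theta^{(r)},\mfv^{(r)})-(\theta^{(s)},\mfv^{(s)})}>\delta/G$. Fix $r<s$. Since iteration $r$ is non-terminal, the stopping test $\max_{i}CV_i\le\delta$ fails, so some index $i$ has $CV_i>\delta$ in \eqref{maximum constraint violation}, and its maximizer $\widetilde{\mfy}$ is appended to $\widetilde{\mathcal{Y}}_i$. Thus at iteration $r$,
\[ l_K(\widetilde{\mfy},\theta^{(r)})-v^{(r)}_{N+1}\norm{\widetilde{\mfy}-\mfy_i}-v^{(r)}_i=CV_i>\delta, \]
whereas at the later iteration $s$ this cut is already present in the master problem, so
\[ l_K(\widetilde{\mfy},\theta^{(s)})-v^{(s)}_{N+1}\norm{\widetilde{\mfy}-\mfy_i}-v^{(s)}_i\le 0. \]
Subtracting the second from the first and bounding the three resulting differences in absolute value gives, using Lemma \ref{lemma: uniformly continuous}(c) on the loss term, $\norm{\widetilde{\mfy}-\mfy_i}\le 2R$ (both $\widetilde{\mfy}$ and $\mfy_i$ lie in the radius-$R$ support by Assumption \ref{assumption:set-assumption}(a)) on the $v_{N+1}$ term, and the unit coefficient on the $v_i$ term,
\[ \delta<\tfrac{4(B+R)\kappa}{\lambda}\norm{\theta^{(r)}-\theta^{(s)}}+2R\,|v^{(r)}_{N+1}-v^{(s)}_{N+1}|+|v^{(r)}_i-v^{(s)}_i|. \]
Each of the three factors is bounded by the full Euclidean distance $\norm{(\theta^{(r)},\mfv^{(r)})-(\theta^{(s)},\mfv^{(s)})}$, and the coefficients sum to $G=1+2R+\frac{4(B+R)\kappa}{\lambda}$, which yields the claimed separation. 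Note this separates iterate $r$ from every later iterate, so all iterates (including the terminal one) are pairwise $\delta/G$-separated.

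It remains to count how many $\delta/G$-separated points fit in the feasible region. Every iterate obeys $\norm{\theta^{(r)}}\le D$ by Assumption \ref{assumption:convex_setting}, while the box $\mathcal{V}$ forces $|v_i|\le (m+1)V_2-mV_1$ for $i\in[N]$ and $|v_{N+1}|\le (V_2-V_1)/\epsilon$; summing these squared coordinate bounds shows $\Theta\times\mathcal{V}$ is contained in the ball of radius $R_0$ about the origin in $\bR^{n_\theta+N+1}$. A standard volume/packing argument then applies: disjoint small balls centered at the pairwise-separated iterates, contained in a slightly enlarged ball of radius $R_0$, have total volume bounded by that of the enclosing ball, which caps their number at $\left(\frac{GR_0}{\delta}+1\right)^{n_\theta+N+1}$. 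Since each non-terminal iteration produces a distinct iterate, this bounds the number of iterations and proves the theorem. I expect the separation estimate to be the main obstacle — in particular, correctly exploiting that a cut is \emph{violated by more than $\delta$} when created but \emph{satisfied} thereafter, and collapsing the weighted sum of coordinate gaps into a single constant $G$ times the Euclidean distance; this step leans essentially on the $\theta$-Lipschitz continuity of $l_K$ from Lemma \ref{lemma: uniformly continuous}(c). The packing count is routine once containment in the radius-$R_0$ ball is verified.
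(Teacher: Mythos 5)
Your proposal is correct and follows essentially the same route as the paper's own proof: the same separation estimate (a cut violated by more than $\delta$ when generated but satisfied at all later iterations, bounded via Lemma \ref{lemma: uniformly continuous}(c), the $2R$ bound on $\norm{\widetilde{\mfy}-\mfy_i}$, and the unit coefficient on $v_i$, collapsing to $G$ times the Euclidean distance between iterates), followed by the same volume/packing count of $\delta/G$-separated points inside the radius-$R_0$ ball in $\bR^{n_\theta+N+1}$. The only differences are cosmetic — the paper fixes the argmax index $i^*$ where you take any violated index, and it makes the enclosing ball of radius $R_0+\delta/G$ explicit where you leave the packing constants implicit — neither of which changes the argument.
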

\begin{proof}
	For ease of notation, we denote $ (\widehat{\theta}^{s},\widehat{\mfv}^{s}) $ the solution found in Step 4  in the $ s $th iteration of Algorithm 1.
	
	Suppose that for $ s = 1,\ldots,S $ the algorithm has not terminated, i.e. $ \max\limits_{i\in[N]}l_{K}(\widetilde{\mfy}_{i},\widehat{\theta}) - \widehat{v}_{N+1}\cdot \norm{\widetilde{\mfy}_{i} - \mfy_{i}} - \widehat{v}_{i} > \delta $.
	
	Let $ i^{*} = \argmax\limits_{i\in[N]}l_{K}(\widetilde{\mfy}_{i},\widehat{\theta}) - \widehat{v}_{N+1}\cdot \norm{\widetilde{\mfy}_{i} - \mfy_{i}} - \widehat{v}_{i} $. Then, $ \widetilde{\mfy}_{i^{*}} $ is added to $ \widetilde{\mathcal{Y}}_{i^{*}} $. We have
	\begin{align}
	\label{proof-convergence 1}
	l_{K}(\widetilde{\mfy}_{i^{*}},\widehat{\theta}^{s}) - \widehat{v}_{N+1}^{s}\cdot \norm{\widetilde{\mfy}_{i^{*}} - \mfy_{i^{*}}} - \widehat{v}_{i^{*}}^{s} > \delta.
	\end{align}
	$ \forall t > s $, we know that
	\begin{align}
	\label{proof-convergence 2}
	l_{K}(\widetilde{\mfy}_{i^{*}},\widehat{\theta}^{t}) - \widehat{v}_{N+1}^{t}\cdot \norm{\widetilde{\mfy}_{i^{*}} - \mfy_{i^{*}}} - \widehat{v}_{i^{*}}^{t} \leq 0.
	\end{align}
	
	Combining \eqref{proof-convergence 1} and \eqref{proof-convergence 2}, we have that, for $ s < t $,
	\begin{align}
	\label{proof-convergence 3}
	l_{K}(\widetilde{\mfy}_{i^{*}},\widehat{\theta}^{s}) - \widehat{v}_{N+1}^{s}\cdot \norm{\widetilde{\mfy}_{i^{*}} - \mfy_{i^{*}}} - \widehat{v}_{i^{*}}^{s} - (l_{K}(\widetilde{\mfy}_{i^{*}},\widehat{\theta}^{t}) - \widehat{v}_{N+1}^{t}\cdot \norm{\widetilde{\mfy}_{i^{*}} - \mfy_{i^{*}}} - \widehat{v}_{i^{*}}^{t}) > \delta.
	\end{align}
	Note that
	\begin{align}
	\label{proof-convergence 4}
	\begin{array}{llll}
	& l_{K}(\widetilde{\mfy}_{i^{*}},\widehat{\theta}^{s}) - \widehat{v}_{N+1}^{s}\cdot \norm{\widetilde{\mfy}_{i^{*}} - \mfy_{i^{*}}} - \widehat{v}_{i^{*}}^{s} - (l_{K}(\widetilde{\mfy}_{i^{*}},\widehat{\theta}^{t}) - \widehat{v}_{N+1}^{t}\cdot \norm{\widetilde{\mfy}_{i^{*}} - \mfy_{i^{*}}} - \widehat{v}_{i^{*}}^{t})\\
	&= l_{K}(\widetilde{\mfy}_{i^{*}},\widehat{\theta}^{s}) - l_{K}(\widetilde{\mfy}_{i^{*}},\widehat{\theta}^{t}) - (\widehat{v}_{N+1}^{s}\cdot \norm{\widetilde{\mfy}_{i^{*}} - \mfy_{i^{*}}} - \widehat{v}_{N+1}^{t}\cdot \norm{\widetilde{\mfy}_{i^{*}} - \mfy_{i^{*}}}) - (\widehat{v}_{i^{*}}^{s} - \widehat{v}_{i^{*}}^{t}) \\
	&\leq \frac{4(B+R)\kappa}{\lambda}\norm{\widehat{\theta}^{s} - \widehat{\theta}^{t}} + 2R|\widehat{v}_{N+1}^{s} - \widehat{v}_{N+1}^{t}| + |\widehat{v}_{i^{*}}^{s} - \widehat{v}_{i^{*}}^{t}| \\
	&\leq (1 + 2R +  \frac{4(B+R)\kappa}{\lambda})\norm{(\widehat{\theta}^{s},\widehat{\mfv}^{s}) - (\widehat{\theta}^{t},\widehat{\mfv}^{t})}
	\end{array}
	\end{align}
	where the first inequality is due to the Lipschitz condition in Lemma \ref{lemma: uniformly continuous}(c).
	
	Combining \eqref{proof-convergence 3} and \eqref{proof-convergence 4}, we have that, for $ s < t $,
	\begin{align}
	\label{proof-convergence 5}
	\norm{(\widehat{\theta}^{s},\widehat{\mfv}^{s}) - (\widehat{\theta}^{t},\widehat{\mfv}^{t})} > \frac{\delta}{G}.
	\end{align}
	Thus, the minimum distance between any two solutions $ (\widehat{\theta}^{1},\widehat{\mfv}^{1}), \ldots,(\widehat{\theta}^{S},\widehat{\mfv}^{S}) $ exceeds $ \delta/G $.
	
	Let $ \mathscr{B}_{s} \in \bR^{n_{\theta} + N + 1} $ denote the ball centered at $ (\widehat{\theta}^{s},\widehat{\mfv}^{s}) $ with radius $ \delta/G $. Let $ R_{0} = \sqrt{D^{2} + N((m+1)V_2 - m V_1)^{2} + (\frac{(V_2-V_1)}{\epsilon})^{2}} $. Let $ \mathscr{B} $ denote the ball centered at origin with radius $ R_{0} + \frac{\delta}{G} $. It follows that the balls $ \{ \mathscr{B}_{s}\}_{s\in[S]} $, which do not intersect with each other, are covered by $ \mathscr{B} $. Thus, we have
	\begin{align*}
	S\beta_{n_{\theta} + N + 1}(\frac{\delta}{G})^{n_{\theta} + N + 1} \leq \beta_{n_{\theta} + N + 1}(R_{0} + \frac{\delta}{G})^{n_{\theta} + N + 1}
	\end{align*}
	where $ \beta_{n_{\theta} + N + 1} $ is the volume of the unit ball in $ \bR^{n_{\theta} + N + 1} $. Thus, we conclude that
	\begin{align*}
	S \leq (\frac{GR_{0}}{\delta} + 1)^{n_{\theta} + N + 1}.
	\end{align*}
\end{proof}
\begin{remark}
	The proof of convergence is in spirit similar to that of the cutting plane methods for robust optimization and distributionally robust optimization \cite{joachims2009cutting,mutapcic2009cutting,luo2017decomposition}. In practice, we mention that the actual number of iterations typically required is much smaller than $ (\frac{GR_{0}}{\delta} + 1)^{n_{\theta} + N + 1} $.
\end{remark}

\begin{remark}[Variants of Algorithm \ref{alg:algorithm1}]
	Note that we add $ \widetilde{\mfy}_{i} $ to $\widetilde{\mathcal{Y}}_{i} $ whenever $ V_{i} > 0 $ for each $ i \in [N] $. Nevertheless, from the convergence proof, it suffices to add only one  $ \widetilde{\mfy}_{i} $ corresponding to the biggest $ V_{i} $ that are positive. Consequently, we dramatically ease the computational burden in each iteration.
\end{remark}

\subsection{Performance guarantees}
One of the main goals of statistical analysis of learning algorithms is to understand how the excess risk of a data dependent decision rule output by the empirical risk minimization depends on the sample size of the observations and on the "complexity" of the class $ \Theta $. Next, we provide a performance guarantee for \ref{wrobust imop} by showing below that the excess risk of the estimator obtained by solving \ref{wrobust imop} would converge sub-linearly to zero.
\begin{theorem}[Excess risk bound]\label{theorem:excess risk bound}
	Define the minimax risk estimator
	\begin{align*}
	\theta^{*} \in \argmin_{\theta \in \Theta}\left\{ \sup\limits_{Q \in \bB_{\epsilon}(P)} \bE_{\mfy \sim Q}\left[l_{K}(\mfy,\theta)\right]\right\},
	\end{align*}
	where $ P $ is the distribution from which the observations $ \{\mfy_{i}\}_{i\in[N]} $ are drawn, and the minimax empirical risk estimator
	\begin{align*}
	\widehat{\theta}_{N} \in \argmin_{\theta \in \Theta}\left\{ \sup\limits_{Q \in \bB_{\epsilon}(\widehat{P}_{N})} \bE_{\mfy \sim Q}\left[l_{K}(\mfy,\theta)\right]\right\}.
	\end{align*}
	and $ \widehat{P}_{N} $ is the empirical distribution of the observations $ \{\mfy_{i}\}_{i \in [N]} $.
	
	Under Assumptions \ref{assumption:convex_setting} - \ref{lipschitz1}, $ \forall 0 <\delta <1 $, the following holds with probability at least $ 1 - \delta $:
	\begin{align*}
	\begin{array}{llll}
	\sup\limits_{Q \in \bB_{\epsilon}(P)} &\bE_{\mfy \sim Q}\left[l_{K}(\mfy,\widehat{\theta}_{N})\right]  - \sup\limits_{Q \in \bB_{\epsilon}(P)} \bE_{\mfy \sim Q}\left[l_{K}(\mfy,\theta^{*})\right] \leq \frac{H}{\sqrt{N}} + \frac{3(B+R)^2\sqrt{\log(2/\delta)}}{\sqrt{2N}}
	\end{array}
	\end{align*}
	where $H$ is a constant depending only on $D, B, R, n_{\theta},\kappa$:
	\begin{align*}
	H &=  96\left(\frac{3D\sqrt{n_{\theta}}}{\kappa} + 2R\right)(B+R).
	\end{align*}
\end{theorem}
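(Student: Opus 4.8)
The plan is to run the classical excess-risk argument, but to route the uniform deviation through the \emph{robust-loss} class rather than through a concentration bound for the empirical Wasserstein distance. Write $\Psi(\theta):=\sup_{Q\in\bB_{\epsilon}(P)}\bE_{\mfy\sim Q}[l_{K}(\mfy,\theta)]$ and $\widehat\Psi_{N}(\theta):=\sup_{Q\in\bB_{\epsilon}(\widehat P_{N})}\bE_{\mfy\sim Q}[l_{K}(\mfy,\theta)]$, so that $\theta^{*}\in\argmin_{\theta}\Psi$ and $\widehat\theta_{N}\in\argmin_{\theta}\widehat\Psi_{N}$. First I would insert and cancel terms,
\[
\Psi(\widehat\theta_{N})-\Psi(\theta^{*})=\big(\Psi(\widehat\theta_{N})-\widehat\Psi_{N}(\widehat\theta_{N})\big)+\big(\widehat\Psi_{N}(\widehat\theta_{N})-\widehat\Psi_{N}(\theta^{*})\big)+\big(\widehat\Psi_{N}(\theta^{*})-\Psi(\theta^{*})\big),
\]
and discard the middle term, which is nonpositive by optimality of $\widehat\theta_{N}$ for $\widehat\Psi_{N}$. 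This bounds the excess risk by $2\sup_{\theta\in\Theta}|\Psi(\theta)-\widehat\Psi_{N}(\theta)|$.

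The crucial second step is to avoid bounding this uniform deviation via a concentration inequality for $\mathcal{W}(P,\widehat P_{N})$: since $l_{K}(\cdot,\theta)$ is $2(B+R)$-Lipschitz in $\mfy$ (Lemma \ref{lemma: uniformly continuous}(b)) one has $|\Psi(\theta)-\widehat\Psi_{N}(\theta)|\le 2(B+R)\,\mathcal{W}(P,\widehat P_{N})$, but the empirical Wasserstein distance only converges at the dimension-cursed rate $N^{-1/n}$, which cannot yield $N^{-1/2}$. Instead I would pass to the Kantorovich dual underlying Theorem \ref{theorem:tractable reformulation}. Writing $\phi_{s}(\theta,\mfy):=\sup_{\widetilde\mfy\in\mathcal{Y}}\big(l_{K}(\widetilde\mfy,\theta)-s\norm{\widetilde\mfy-\mfy}\big)$, strong duality gives $\Psi(\theta)=\min_{s\ge0}\{s\epsilon+\bE_{P}[\phi_{s}(\theta,\cdot)]\}$ and the analogous identity for $\widehat\Psi_{N}$; the Lipschitz-in-$\mfy$ bound confines the minimizing multiplier to $s\in[0,2(B+R)]$ uniformly in $\theta$. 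Using $|\min_{s}A(s)-\min_{s}B(s)|\le\sup_{s}|A(s)-B(s)|$, the task reduces to controlling
\[
\sup_{\theta\in\Theta,\;0\le s\le 2(B+R)}\Big|\bE_{P}[\phi_{s}(\theta,\cdot)]-\tfrac1N\textstyle\sum_{i\in[N]}\phi_{s}(\theta,\mfy_{i})\Big|,
\]
an empirical process indexed by the \emph{low-dimensional} parameter $(\theta,s)\in\Theta\times[0,2(B+R)]\subseteq\bR^{n_{\theta}+1}$, which is exactly what buys the parametric rate.

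I would then record the three structural facts about the index class $\{\phi_{s}(\theta,\cdot)\}$. It is bounded in $[0,(B+R)^{2}]$ by Lemma \ref{lemma: uniformly continuous}(a); it inherits Lipschitz continuity in $\theta$ from Lemma \ref{lemma: uniformly continuous}(c); and it is $2R$-Lipschitz in $s$ because $\norm{\widetilde\mfy-\mfy}\le 2R$ on the support. Boundedness makes $(\mfy_{1},\dots,\mfy_{N})\mapsto\sup_{\theta,s}|\cdots|$ have bounded differences of order $(B+R)^{2}/N$, so McDiarmid's inequality produces the $\tfrac{3(B+R)^{2}\sqrt{\log(2/\delta)}}{\sqrt{2N}}$ tail term and reduces matters to the \emph{expected} deviation. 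The latter I would bound by symmetrization followed by Dudley's entropy integral: the joint Lipschitz dependence converts a $u$-net of $\Theta\times[0,2(B+R)]$ into a sup-norm $u$-cover of the class, whence $\log N(u)\lesssim (n_{\theta}+1)\log(\cdot/u)$, and integrating over the range $[0,(B+R)^{2}]$ yields the $H/\sqrt N$ term, whose two summands $\tfrac{3D\sqrt{n_{\theta}}}{\kappa}$ and $2R$ arise from the $\theta$- and $s$-directions of the index set and whose common factor $(B+R)$ reflects the shared Lipschitz scale.

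The main obstacle is this final covering/Dudley computation: carefully tracking the net radii chosen separately in the $\theta$- and $s$-directions, together with the diameters ($\sim D$ for $\Theta$ via Assumption \ref{assumption:convex_setting}, $2(B+R)$ for the multiplier) and the Lipschitz constants, so as to collect the constant into exactly the stated $H=96\big(\tfrac{3D\sqrt{n_{\theta}}}{\kappa}+2R\big)(B+R)$. A secondary technical point, which I would handle first, is rigorously justifying the dual reformulation together with the uniform restriction of the multiplier $s$ to a compact interval, so that the final supremum is genuinely taken over a compact, finite-dimensional index set; this rests on the compactness of $\mathcal{Y}$ and $\Theta$ and on the uniform Lipschitz-in-$\mfy$ bound of Lemma \ref{lemma: uniformly continuous}(b).
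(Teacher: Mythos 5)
Your proposal is correct and takes essentially the same route as the paper: the paper's proof verifies boundedness and the Lipschitz properties via Lemma \ref{lemma: uniformly continuous}, computes the sup-norm covering number and Dudley entropy integral of the class $\{l_{K}(\cdot,\theta):\theta\in\Theta\}$ (giving the $\frac{6D(B+R)\sqrt{n_{\theta}}}{\kappa}$ term), and then substitutes these into Theorem 2 of \cite{NIPS2018_7534}, whose internal argument is exactly the Kantorovich-dual reformulation, truncation of the dual multiplier to $[0,2(B+R)]$, McDiarmid tail, and symmetrization-plus-Dudley bound that you reconstruct from scratch. The only difference is presentational: you unfold the cited theorem's machinery explicitly, whereas the paper quotes it as a black box.
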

\begin{proof}
	By Lemma \ref{lemma: uniformly continuous}(a), we have $\mathcal{Y})$ is bounded since $ diam(\mathcal{Y}) \leq 2R$, where $ diam(\mathcal{Y}) $ is the diameter of the observation space $ \mathcal{Y} $. $ \forall \theta \in \Theta$, the loss function $ l_{K}(\mfy,\theta) $ is $ 2(B+R) $-Lipschitz continuous in $ \mfy $, and $ 0 \leq l_{K}(\mfy,\theta) \leq (B+R)^{2} $ by Lemma \ref{lemma: uniformly continuous}  (a) and (b). We next extends Theorem 2 in \cite{NIPS2018_7534} to construct our proof of the excess risk bound.

	Denote $ \mathcal{F}:= \{l_{K}(\cdot,\theta):\theta \in \Theta \} $ the class of the loss functions. Before evaluating the Dudley entropy integral, we need to estimate the covering number $ \mathcal{N}(\mathcal{F},\norm[\infty]{\cdot},\cdot) $. First observe that for any two $ l_{K}(\cdot,\theta_{1}), l_{K}(\cdot,\theta_{2}) \in \mathcal{F} $ corresponding to $ \theta_{1},\theta_{2} \in \Theta $, we have
	\begin{align*}
	|l_{K}(\mfy,\theta_{1}) - l_{K}(\mfy,\theta_{2})| \leq \frac{4(B+R)\kappa}{\lambda}\norm{\theta_{1} - \theta_{2}}.
	\end{align*}
	Since $ \Theta $ belongs to the ball in $\bR^{n_{\theta}}$ with radius $ D $,
	\begin{align*}
	\mathcal{N}(\mathcal{F},\norm[\infty]{\cdot},u)  \leq \mathcal{N}(\Theta,\norm[2]{\cdot},\frac{\lambda}{4(B+R)\kappa}u) \leq \left(\frac{12D(B+R)}{\kappa u}\right)^{n_{\theta}}
	\end{align*}
	for $0 < u < \frac{4D(B+R)}{\kappa}$, and $\mathcal{N}(\mathcal{F},\norm[\infty]{\cdot},u) = 1$ for $u \geq \frac{4D(B+R)}{\kappa}$, which leads to
	\begin{align*}
	\int^\infty_0 \sqrt{\log \mathcal{N}(\mathcal{F},\norm[\infty]{\cdot},u) } d u &\leq \int^{\frac{4D(B+R)}{\kappa}}_0 \sqrt{n_{\theta}\log \left(\frac{12D(B+R)}{\kappa u}\right)} d u \\
	& = \frac{12D(B+R)}{\kappa}\sqrt{n_{\theta}}\int^{1/3}_0 \sqrt{\log \left(1/u\right)} d u \\
	& \leq \frac{6D(B+R)}{\kappa}\sqrt{n_{\theta}}.
	\end{align*}
	Substituting the above inequality into the bound provided in Theorem 2 of \cite{NIPS2018_7534}, we get the desired estimate.
\end{proof}
\begin{remark}[Performance Guarantees]
	\begin{itemize}
		\item The bounded support assumption for $ \cY $ of the noisy decisions is restrictive but seems to be unavoidable for any a priori guarantees of the type described in Theorem \ref{theorem:excess risk bound}. In future work, we will investigate whether we could obtain other types of performace guarantees while relaxing $\bP_{\mfy}$ to be light-tailed.
		\item Analogous to the convergence rate of empirical risk minimization when $ \epsilon = 0 $, we get an $ \mathcal{O}(1/\sqrt{N}) $ excess risk bound. However, the obtained excess risk bound does not depend on the radius $ \epsilon $ of the Wasserstein ambiguity set. Similar to \cite{NIPS2018_7534}, this phenomenon is due to the fact that we are using the Lipschitz continuity of the loss function $ l_{K}(\mfy,\theta) $.
		\item  The right terms in the excess risk bound inequality increase as either $D, B, R, n_{\theta}$ grow or $ \kappa $ shrinks, indicating that the learnability of the decision making model decreases. This is consistent with our observation that uncertainties in the model, data, and parameter space will increase the difficulty of learning the parameters through inverse multiobjective optimization in general.
	\end{itemize}
\end{remark}

\section{Experiments}
\label{sec:experiments wimop}
In this section, we will provide an MQP and a portfolio optimization problem to illustrate the performance of the proposed algorithm \ref{alg:algorithm1}. The mixed integer second order conic problems are solved by Gurobi \cite{gurobi}. All the algorithms are programmed with Julia \cite{bezanson2017julia}. The single level reformulations for \ref{imop} and \ref{wrobust imop} are provided in \ref{Appendix: single level reformulation of MQP} and \ref{Appendix: single level reformulation of wro-imop with MQP}.

\subsection{Sythetic data: learning the objective functions of an MQP}
Consider the following multiobjective quadratic optimization problem.
\begin{align*}
\begin{array}{llll}
\min\limits_{\mfx \in \bR_{+}^{2}} & \left( \begin{matrix} f_{1}(\mfx) = \frac{1}{2}\mfx^{T}Q_{1}\mfx + \mfc_{1}^{T}\mfx  \\ f_{2}(\mfx) = \frac{1}{2}\mfx^{T}Q_{2}\mfx + \mfc_{2}^{T}\mfx\end{matrix} \right) \vspace{1mm} \\
\;s.t.   & A\mfx \leq \mfb
\end{array}
\end{align*}
where the parameters of the two objective functions are
\begin{align*}
Q_{1} = \begin{bmatrix}
1 & 0 \\
0 & 2
\end{bmatrix}, \mfc_{1} = \begin{bmatrix}
-0.5 \\
-1
\end{bmatrix},
Q_{2} = \begin{bmatrix}
2 & 0 \\
0 & 1
\end{bmatrix}, \mfc_{2} = \begin{bmatrix}
-5 \\
-2.5
\end{bmatrix},
\end{align*}
and the parameters for the feasible region are
\begin{align*}
A = \begin{bmatrix}
1 & 0 \\
0 & 1
\end{bmatrix}, \mfb = \begin{bmatrix}
3 \\
3
\end{bmatrix}.
\end{align*}

\begin{figure*}
	\centering
	\begin{subfigure}[t]{0.45\textwidth}
		\includegraphics[width=\textwidth]{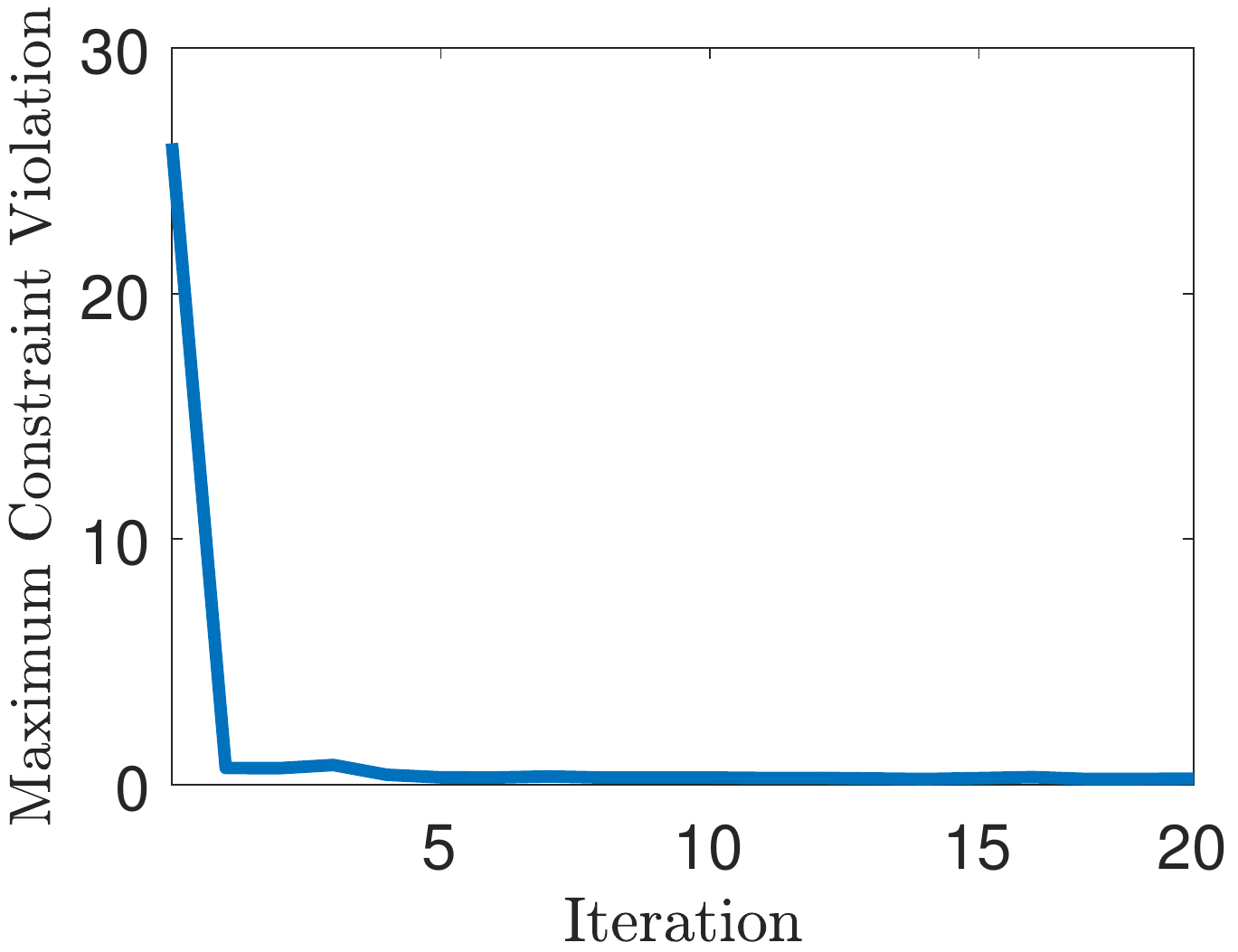}
		\caption{}
		\label{fig:qp_c_criteria}
	\end{subfigure}
	\hspace{20pt}
	\begin{subfigure}[t]{0.45\textwidth}
		\includegraphics[width=\textwidth]{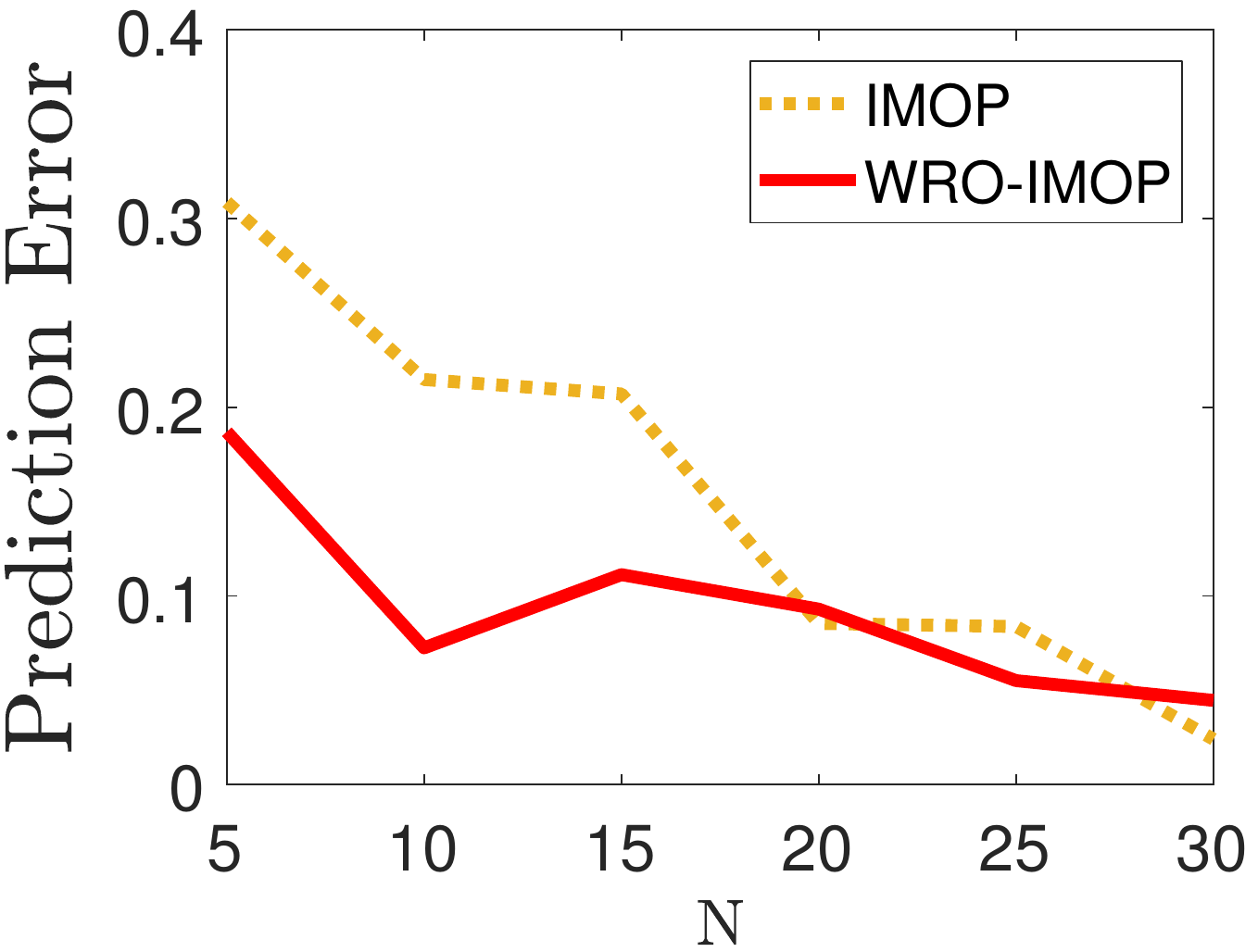}
		\caption{}
		\label{fig:qp_c_compare}
	\end{subfigure}
	\caption{Learning the objective functions of a Multiobjective quadratic program. Results are averaged over 10 repetitions. (a) Maximum constraint violation versus iteration for $ N = 15 $. (b) Prediction errors for two methods with different $ N $.
	}
	\label{fig:qp_c_online}
\end{figure*}

We seek to learn $ \mfc_{1}$ and  $\mfc_{2}$ in this experiment. The data is generated as follows. We first compute Pareto optimal solutions $ \{\mfx_{i}\}_{i \in [N]} $ by solving \ref{weighting problem} with weight samples $ \{w_{i}\}_{i \in [N]} $ that are uniformly chosen from $ \mathscr{W}_{2} $. Next, the noisy decision $ \mfy_{i} $ is obtained by adding noise to $ \mfx_{i} $ for each $ i \in [N] $. More precisely, $ \mfy_{i} = \mfx_{i} + \epsilon_{i} $, where each element of $ \epsilon_{i} $ has a uniform distribution supporting on $ [-0.25,0.25] $ with mean $ 0 $ for all $ i \in [N] $.

We assume that $\mfc_{1}$ and $ \mfc_{2} $ are within $[-6,0]^{2}$, and the first elements for them are given. $K = 6$ weights from $\mathscr{W}_{2}$ are evenly sampled. The radius $ \epsilon $ of the Wasserstein ambiguity set is selected from the set $ \{10^{-4},10^{-3},10^{-2},10^{-1},1\} $. We report below the results with the lowest prediction error across all candidate radii. The stopping criteria $ \delta $ is set to be $ 0.1 $. Then, we implement Algorithm \ref{alg:algorithm1} with different $ N $.

To illustrate the performance of the algorithm in a statistical way, we run $10$ repetitions of the experiments. Figure \ref{fig:qp_c_criteria} shows the maximum constraint violation $ \max_{i\in[N]}V_{i} $ versus iteration for one repetition when $ N = 10 $. As can be seen in the figure, the algorithm converges very fast. In Figure \ref{fig:qp_c_compare}, we report the prediction errors averaged over $ 10 $ repetitions with both the robust and non-robust approaches for different $ N $. Here, we use an independent validation set that consists of $ 10^5 $ noisy decisions generated in the same way as the training data to compute the prediction error. The experiments suggest that the Wasserstein distributionally robust approach can significnatly reduce the prediction error, especially when $ N $ is small, i.e., we have a very limited number of observations.

\subsection{Real world case study: learning the expected returns}

In this example, we consider various noisy decisions arising from different investors in a stock market. More precisely, we consider a portfolio selection problem, where investors need to determine the fraction of their wealth to invest in each security in order to maximize the total return and minimize the total risk. The classical Markovitz mean-variance portfolio selection \cite{markowitz1952portfolio} in the following is frequently employed by analysts.
\begin{align*}
\begin{array}{llll}
\min & \left( \begin{matrix}f_{1}(\mfx) &= -\mathbf{r}^{T}\mfx \\ f_{2}(\mfx) &= \mfx^{T}Q\mfx\end{matrix} \right) \vspace{1mm} \\
\;s.t.   & 0 \leq x_{i} \leq b_{i} &\forall i \in [n] \\
& \sum\limits_{i=1}^{n}x_{i} = 1
\end{array}
\end{align*}
where  $ \mathbf{r} \in \bR^{n}_{+}$  is a vector of individual security expected returns, $Q \in \bR^{n \times n}$ is the covariance matrix of securities returns, $\mfx$ is a portfolio specifying the proportions of capital to be invested in the different securities, and $b_{i}$ is an upper bound on the proportion of security $i$, $\forall i \in [n]$.

\textbf{Dataset}: The dataset is derived from monthly total returns of 30  stocks from a blue-chip index which tracks the performance of top 30 stocks in the market when the total investment universe consists of thousands of assets. The true expected returns and true return covariance  matrix for the first $8$ securities are given in the supplementary material. Suppose a learner seeks to learn the expected return for the first four securities that an analyst uses based on $20$ noisy decisions from investors served by this analyst.

The noisy decision for each investor $i \in [20]$ is generated as follows. We set each upper bound for the proportion of the $8$ securities to $b_{i} = 1.0 , \forall i \in [8]$. Then, we uniformly sample $20$ weights and use them to generate optimal portfolios on the efficient frontier that is plot in Figure \ref{fig:effcientfrontierport}. Subsequently, each component of  these portfolios is rounded to the nearest thousandth, which can be seen as measurement error. The radius $ \epsilon $ of the Wasserstein ambiguity set is selected from the set $ \{10^{-4},10^{-3},10^{-2},10^{-1},1\} $. The stopping criteria $ \delta $ is set to be $ 0.1 $.

\begin{figure*}
	\centering
	\begin{subfigure}[t]{0.45\textwidth}
		\includegraphics[width=1\linewidth]{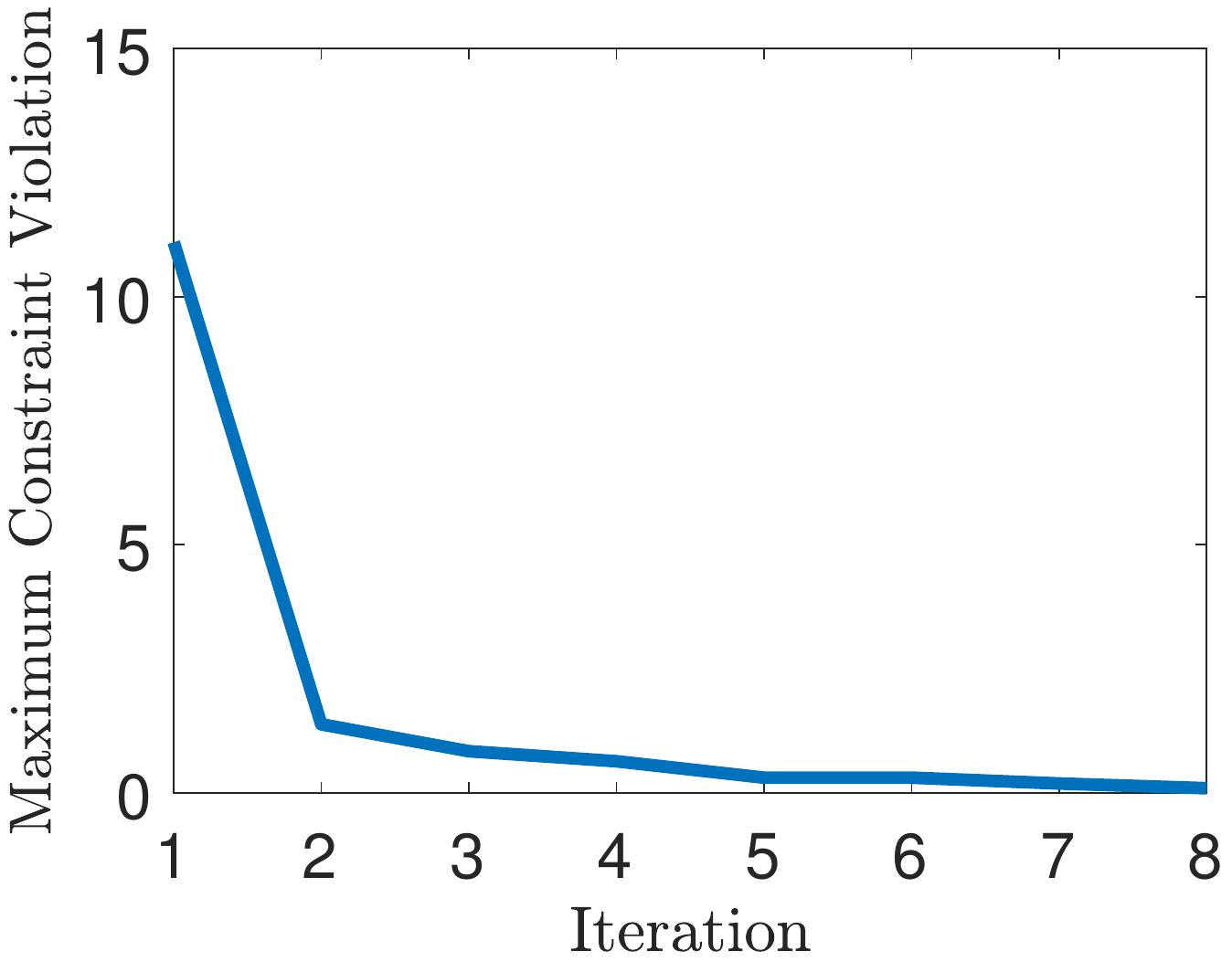}
		\caption{}
		\label{fig:portmax}
	\end{subfigure}
	\hspace{20pt}
	\begin{subfigure}[t]{0.43\textwidth}
		\includegraphics[width=1\linewidth]{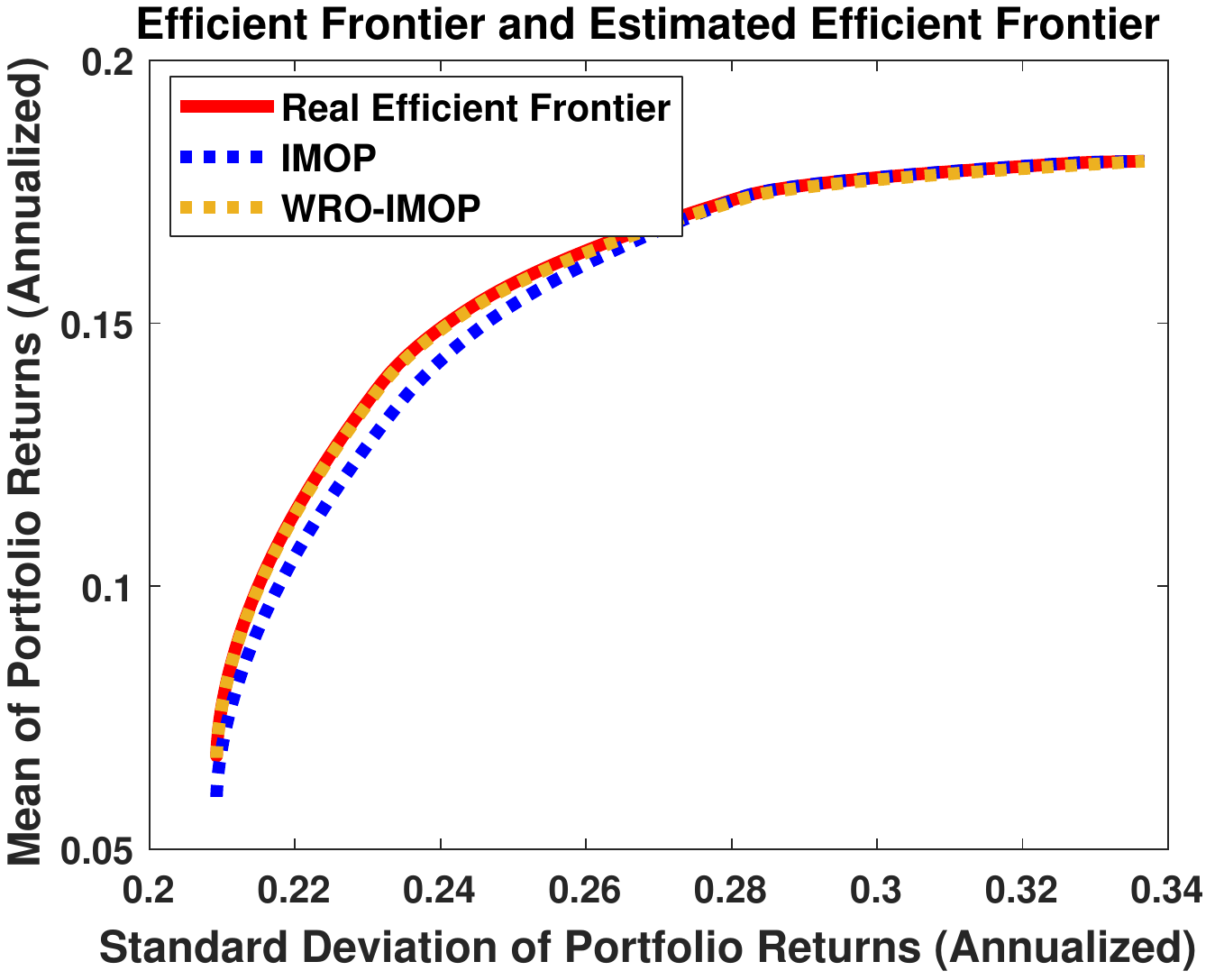}
		\caption{}
		\label{fig:effcientfrontierport}
	\end{subfigure}
	\caption{Learning the expected return. (a) Maximum constraint violation versus iteration. (b) The red line indicates the real efficient frontier. The yellow dots indicates the estimated efficient frontier using the distributionally robust approach. The blue dots indicates the estimated efficient frontier using the non-robust approach.}
	\label{fig:port wimop}
\end{figure*}

Figure \ref{fig:portmax} shows that our algorithm converges in $ 8 $ iterations. We also plot the estimated efficient frontiers using both the robust and non-robust approaches with $K = 6$ in Figure \ref{fig:effcientfrontierport}. We can see that the estimated efficient frontier of the Wasserstein distributionally robust approach is closer to the real one than the non robust approach, showing that our method in this paper allows for a lower prediction error when only limited number of decisions observed are accessible. Note that the first function is not strongly convex. The experiment results suggest that our methodology is generalizable to a broader class of problems.

\section{Conclusions}

In this paper, we present a novel Wasserstein distributionally robust framework for constructing inverse multiobjective optimization estimator. We show that the proposed framework has statistical performance guarantees, and the excess risk of the distributionally robust inverse multiobjective optimization estimator would converge to zero with a sub-linear rate as the number of observed decisions approaches to infinity. To solve the resulting minmax problem, we reformulate it as a semi-infinite program and develop a cutting-plane algorithm which converges to a $\delta$-optimal solution in finite iterations. We demonstrate the effectiveness of our method on both a multiobjective quadratic program and a portfolio optimization problem.


%
%
%
%
%

\newpage
\appendix
\numberwithin{equation}{section}

\section{Omitted mathematical reformulations}
Before giving the reformulations, we first make some discussions about the surrogate loss functions.
\begin{align*}
l_{K}(\mfy,\theta) & = \min_{z_{k} \in \{0,1\}}\norm{\mfy - \sum_{k\in[K]}z_{k}\mfx_{k}}^{2} \vspace{1mm}\\
& = \min_{z_{k} \in \{0,1\}}\sum_{k\in[K]}\norm{\mfy - z_{k}\mfx_{k}}^{2} - (K - 1)\norm{\mfy}^{2} \vspace{1mm}\\
\end{align*}
where $ \mfx_{k} \in S(w_{k},\theta) $ and $ \sum_{k\in[K]}z_{k} = 1 $.

Since $ (K - 1)\norm{\mfy}^{2} $ is a constant, we can safely drop it and use the following as the surrogate loss function when solving the optimization program in the implicit update,
\begin{align*}
l_{K}(\mfy,\theta) = \min_{z_{k} \in \{0,1\}}\sum_{k\in[K]}\norm{\mfy - z_{k}\mfx_{k}}^{2}
\end{align*}
where $ \mfx_{k} \in S(w_{k},\theta) $ and $ \sum_{k\in[K]}z_{k} = 1 $.

%
%

\subsection{Single level reformulation for the Inverse Multiobjective Quadratic Problem}\label{Appendix: single level reformulation of MQP}
When the objective functions are quadratic and the feasible region is a polyhedron, the multiobjective optimization has the following form
\begin{align*}
\label{mqp}
\tag*{MQP}
\begin{array}{llll}
\min\limits_{\mfx \in \bR^{n}} & \begin{bmatrix}
\frac{1}{2}\mfx^{T} Q_{1}\mfx + \mfc_{1}^{T}\mfx \\
\vdots \\
\frac{1}{2}\mfx^{T} Q_{p}\mfx + \mfc_{p}^{T}\mfx
\end{bmatrix} \vspace{2mm}\\
\;s.t. & A\mfx \geq \mfb
\end{array}
\end{align*}
where $ Q_{l} \in \mathbf{S}^{n}_{+} $ (the set of symmetric positive semidefinite matrices) for all $ l \in [p] $..

When trying to learn $ \{\mfc_{l}\}_{l \in [p]} $, the single level reformulation for the \ref{imop} is given in the following 	
\begin{align*}
\begin{array}{llll}
\min\limits_{\theta, \mfx_{k},\mfu_{k},\vartheta_{i,k},z_{i,k}} & \frac{1}{N}\sum\limits_{i\in[N]}\sum\limits_{k\in[K]}\norm{\mfy_i - \vartheta_{i,k} }^{2} \vspace{1mm} \\
\text{s.t.}
& \left[\begin{array}{llll}
& \mathbf{g}(\mfx_{k}) \leq \zero, \;\; \mfu_{k} \geq \zero, \vspace{1mm} \\
& \mfu_{k}^{T}\mathbf{g}(\mfx_{k}) = 0,  \vspace{1mm} \\
& \nabla_{\mfx_{k}} w_{k}^{T}\mathbf{f}(\mfx_{k},\theta) + \mfu_{k} \cdot \nabla_{\mfx_{k}}\mathbf{g}(\mfx_{k}) = \zero, \vspace{1mm}
\end{array} \right] & \forall k\in[K], \vspace{1mm} \\
& 0 \leq \vartheta_{i,k} \leq M_{i,k}z_{i,k}, & i\in[N] , \forall k\in[K], \vspace{1mm} \\
& \mfx_{k} - M_{i,k}(1 - z_{i,k}) \leq \vartheta_{i,k} \leq \mfx_{k} & i\in[N] , \forall k\in[K], \vspace{1mm} \\
& \sum\limits_{k\in[K]}z_{k} = 1, \vspace{1mm} \\
& \theta \in \Theta, \mfx_{k} \in \bR^{n},\;\; \mfu_{k} \in \bR^{q}_{+}, \;\; \vartheta_{i,k} \in \bR^{n}, \;\; z_{i,k} \in \{0,1\}, & i\in[N] , \forall k\in[K],
\end{array}
\end{align*}
\begin{align*}
\begin{array}{llll}
\min\limits_{\mfc_{l}, \mfx_{k},\mfu_{k},\mft_{k} ,\vartheta_{i,k},z_{i,k}} & \frac{1}{N}\sum\limits_{i\in[N]}\sum\limits_{k\in[K]}\norm{\mfy_i - \vartheta_{i,k} }^{2} \vspace{1mm} \\
\text{s.t.} & \mfc_{l} \in \widetilde{C}_{l} & \forall l \in [p]\vspace{1mm} \\
& \left[\begin{array}{llll}
& \mfA\mfx_{k} \geq \mfb,\; \mfu_{k} \geq \zero \vspace{1mm}\\
& \mfu_{k} \leq M\mft_{k} \vspace{1mm} \\
& \mfA\mfx_{k} - \mfb \leq M(1 - \mft_{k})  \vspace{1mm} \\
& (w_{k}^{1}Q_{1} + \cdots + w_{k}^{p}Q_{p})\mfx_{i} + w_{k}^{1}\mfc_{1} + \cdots + w_{k}^{p}\mfc_{p} - \mfA^{T}\mfu_{k} = 0 \vspace{1mm}
\end{array} \right] & \forall k\in[K] \vspace{1mm} \\
& 0 \leq \vartheta_{i,k} \leq M_{i,k}z_{i,k}, & i\in[N] , \forall k\in[K], \vspace{1mm} \\
& \mfx_{k} - M_{i,k}(1 - z_{i,k}) \leq \vartheta_{i,k} \leq \mfx_{k} & \forall i\in[N] , \forall k\in[K], \vspace{1mm} \\
& \sum\limits_{k\in[K]}z_{k} = 1 \vspace{1mm} \\
& \mfc_{l} \in \widetilde{C}_{l}, \;\; \mfx_{k} \in \bR^{n},\;\; \mfu_{k} \in \bR^{q}_{+}, \;\; \mft_{k} \in \{0,1\}^{q}, \;\; \vartheta_{i,k} \in \bR^{n}, \;\;z_{i,k} \in \{0,1\} & \forall l \in [p],\forall i\in[N],\forall k\in[K]
\end{array}
\end{align*}
where  $ \widetilde{C}_{l} $ is a convex set for each $ l \in [p] $.  We have a similar single level reformulation when learning the Right-hand side $ \mfb $.

%
%

\subsection{Single level reformulation for the Finite problem \eqref{imop reformulation 3}  with Multiobjective Quadratic Problem} \label{Appendix: single level reformulation of wro-imop with MQP}
\begin{align*}
\begin{array}{llll}
\min\limits_{\theta,\mfv,\mfx_{k},\mfu_{k},\mft_{k},z_{ijk}} & \epsilon \cdot v_{N+1} + \frac{1}{N}\sum\limits_{i\in[N]}v_{i}, \vspace{1mm}\\
\;s.t. & \norm{\widetilde{\mfy}_{ij} - \mfx_{k}}^{2} - v_{N+1}\cdot \norm{\widetilde{\mfy}_{ij} - \mfy_{i}} - v_{i} \leq Mz_{ijk},  \vspace{1mm} \\
& \left[\begin{array}{llll}
& \mfA\mfx_{k} \geq \mfb,\; \mfu_{k} \geq \zero \vspace{1mm}\\
& \mfu_{k} \leq M\mft_{k} \vspace{1mm} \\
& \mfA\mfx_{k} - \mfb \leq M(1 - \mft_{k})  \vspace{1mm} \\
& (w_{k}^{1}Q_{1} + \cdots + w_{k}^{p}Q_{p})\mfx_{i} + w_{k}^{1}\mfc_{1} + \cdots + w_{k}^{p}\mfc_{p} - \mfA^{T}\mfu_{k} = 0 \vspace{1mm}
\end{array} \right] & \forall k\in[K] \vspace{1mm} \\
& 0 \leq \vartheta_{k} \leq M_{k}z_{k} & \forall k\in[K] \vspace{1mm} \\
& \mfx_{k} - M_{k}(1 - z_{k}) \leq \vartheta_{k} \leq \mfx_{k} & \forall k\in[K] \vspace{1mm} \\
& \sum\limits_{k\in[K]}z_{ijk} = K - 1, \vspace{1mm} \\
& \theta \in \Theta, v \in \mathcal{V}, \;\;\mfu_{k} \in \bR^{q}_{+}, \;\; \mft_{k} \in \{0,1\}^{q}, \;\;  z_{ijk} \in \{0,1\}, \forall i\in[N], j \in [J_{i}], k \in [K].
\end{array}
\end{align*}

\section{Omitted Proofs}

\subsection{Example \ref{example:lipschitz}}
We will show next that $ \kappa = 2R $.
Let $ \theta_{1} = (Q_{1}^{1},Q_{2}^{1},\mfc_{1}^{1},\mfc_{2}^{1}), \theta_{2} = (Q_{1}^{2},Q_{2}^{2},\mfc_{1}^{1},\mfc_{2}^{2}) $. Then,
\begin{align*}
\begin{array}{llll}
h(\mfx,w,\theta_{1},\theta_{2}) & = w^{T}\mff(\mfx, \theta_{1}) - w^{T}\mff(\mfx,\theta_{2}) \\
& = \frac{1}{2}w_{1}\mfx^{T} Q_{1}^{1}\mfx + w_{1}\mfx^{T}\mfc_{1}^{1} + \frac{1}{2}w_{2}\mfx^{T} Q_{2}^{1}\mfx + w_{2}\mfx^{T}\mfc_{2}^{1} \\
& \;\;\;\;- \frac{1}{2}w_{1}\mfx^{T} Q_{1}^{2}\mfx - w_{1}\mfx^{T}\mfc_{1}^{2} - \frac{1}{2}w_{2}\mfx^{T} Q_{2}^{2}\mfx - w_{2}\mfx^{T}\mfc_{2}^{2}.
\end{array}
\end{align*}
Since $ h(\mfx,w,\theta_{1},\theta_{2}) $ is continuously differentiable in $ \mfx $, the Lipschitz constant can be estimated by bounding the norm of the gradient of $ h $. We have
\begin{align*}
\frac{\partial h}{\partial \mfx} = w_{1}(Q_{1}^{1} - Q_{1}^{2})\mfx + w_{1}(\mfc_{1}^{1} - \mfc_{1}^{2}) + w_{2}(Q_{2}^{1} - Q_{2}^{2})\mfx + w_{2}(\mfc_{2}^{1} - \mfc_{2}^{2}).
\end{align*}
Thus,
\begin{align*}
\begin{array}{llll}
\sup\limits_{\norm{\mfx} \leq R}\norm{\frac{\partial h}{\partial \mfx}} & = \sup\limits_{\norm{\mfx} \leq R}\norm{w_{1}(Q_{1}^{1} - Q_{1}^{2})\mfx + w_{1}(\mfc_{1}^{1} - \mfc_{1}^{2}) + w_{2}(Q_{2}^{1} - Q_{2}^{2})\mfx + w_{2}(\mfc_{2}^{1} - \mfc_{2}^{2})} \\
& \leq \sup\limits_{\norm{\mfx} \leq R}\norm{w_{1}(Q_{1}^{1} - Q_{1}^{2})\mfx} + \norm{w_{1}(\mfc_{1}^{1} - \mfc_{1}^{2})} \\
& \;\;\;\;+ \sup\limits_{\norm{\mfx} \leq R}\norm{w_{2}(Q_{2}^{1} - Q_{2}^{2})\mfx} + \norm{w_{2}(\mfc_{2}^{1} - \mfc_{2}^{2})} \\
& \leq w_{1}\norm[F]{Q_{1}^{1} - Q_{1}^{2}}\cdot\sup\limits_{\norm{\mfx} \leq R}\norm{\mfx} + w_{1} \norm{\mfc_{1}^{1} - \mfc_{1}^{2}} \\
& \;\;\; + w_{2}\norm[F]{Q_{2}^{1} - Q_{2}^{2}}\cdot\sup\limits_{\norm{\mfx} \leq R}\norm{\mfx} + w_{2} \norm{\mfc_{2}^{1} - \mfc_{2}^{2}} \\
& \leq R\norm[F]{Q_{1}^{1} - Q_{1}^{2}} + \norm{\mfc_{1}^{1} - \mfc_{1}^{2}} + R\norm[F]{Q_{2}^{1} - Q_{2}^{2}} + \norm{\mfc_{2}^{1} - \mfc_{2}^{2}} \\
& \leq 2R\sqrt{\norm[F]{Q_{1}^{1} - Q_{1}^{2}}^{2} + \norm{\mfc_{1}^{1} - \mfc_{1}^{2}}^{2} + \norm[F]{Q_{2}^{1} - Q_{2}^{2}}^{2} + \norm{\mfc_{2}^{1} - \mfc_{2}^{2}}^{2}} \\
& = 2R\norm{\theta_{1} - \theta_{2}}.
\end{array}
\end{align*}
where the last inequality follows from the Power mean inequality. Hence, $ h(\cdot,w,\theta_{1},\theta_{2}) $ is $ 2R\norm{\theta_{1} - \theta_{2}}$-Lipschitz continuous on $ \mathcal{Y} $.

\subsection{Proof of Lemma \ref{lemma: uniformly continuous}}
\begin{proof}
	\begin{itemize}
		\item[(a)] Proof of (a) is straightforward. $ l_{K}(\mfy,\theta) = \min\limits_{\mfx \in \bigcup\limits_{k\in [K]}  S(w_{k},\theta)} \norm{\mfy - \mfx}^{2} \leq (\norm{\mfy} + B)^{2} \leq (R + B)^{2} $.
		
		\item[(b)] $ \forall \theta \in \Theta $, $ \forall \mfy_{1},\mfy_{2} \in \mathcal{Y} $, let
		\begin{align*}
		l_{K}(\mfy_{1},\theta) = \norm{\mfy_{1} - S(w^{1},\theta)}^{2},\; l_{K}(\mfy_{2},\theta) = \norm{\mfy_{2} - S(w^{2},\theta)}^{2}.
		\end{align*}
		
		Without of loss of generality, let $ l_{K}(\mfy_{1},\theta) \geq l_{K}(\mfy_{2},\theta) $. Then,
		\begin{align}\label{lips-in-y}
		\begin{array}{llll}
		|l_{K}(\mfy_{1},\theta) - l_{K}(\mfy_{2},\theta)| = l_{K}(\mfy_{1},\theta) - l_{K}(\mfy_{2},\theta) \vspace{1mm}\\
		= \norm{\mfy_{1} - S(w^{1},\theta)}^{2} - \norm{\mfy_{2} - S(w^{2},\theta)}^{2} \vspace{1mm}\\
		\leq \norm{\mfy_{1} - S(w^{2},\theta)}^{2} - \norm{\mfy_{2} - S(w^{2},\theta)}^{2} \vspace{1mm}\\
		= \langle \mfy_{1} - \mfy_{2}, \mfy_{1} + \mfy_{2} - 2S(w^{2},\theta) \rangle \vspace{1mm} \\
		\leq 2(B+R)\norm{\mfy_{1} - \mfy_{2}}
		\end{array}
		\end{align}
		
		The last inequality is due to Cauchy-Schwartz inequality and the Assumptions 3.1(a), that is
		\begin{align}\label{lips-in-y2}
		\norm{\mfy_{1} + \mfy_{2} - 2S(w^{2},\theta)} \leq 2(B+R)
		\end{align}
		
		Plugging \eqref{lips-in-y2}in \eqref{lips-in-y} yields the claim.
		
		\item[(c)] $ \forall \mfy \in \mathcal{Y} $, $ \forall \theta_{1},\theta_{2} \in \Theta $, let
		\begin{align*}
		l_{K}(\mfy,\theta_{1}) = \norm{\mfy - S(w^{1},\theta_{1})}^{2},\; l_{K}(\mfy,\theta_{2}) = \norm{\mfy_{2} - S(w^{2},\theta_{2})}^{2}.
		\end{align*}
		
		Without of loss of generality, let $ l_{K}(\mfy,\theta_{1}) \geq l_{K}(\mfy,\theta_{2}) $. Then,
		\begin{align}\label{lips}
		\begin{array}{llll}
		|l_{K}(\mfy,\theta_{1}) - l_{K}(\mfy,\theta_{2})| = l_{K}(\mfy,\theta_{1}) - l_{K}(\mfy,\theta_{2}) \vspace{1mm}\\
		= \norm{\mfy - S(w^{1},\theta_{1})}^{2} - \norm{\mfy - S(w^{2},\theta_{2})}^{2} \vspace{1mm}\\
		\leq \norm{\mfy - S(w^{2},\theta_{1})}^{2} - \norm{\mfy - S(w^{2},\theta_{2})}^{2} \vspace{1mm}\\
		= \langle S(w^{2},\theta_{2}) - S(w^{2},\theta_{1}), 2\mfy - S(w^{2},\theta_{1}) - S(w^{2},\theta_{2} ) \rangle \vspace{1mm} \\
		\leq 2(B+R)\norm{S(w^{2},\theta_{2}) - S(w^{2},\theta_{1})}
		\end{array}
		\end{align}
		
		The last inequality is due to Cauchy-Schwartz inequality and the Assumptions 3.1(a), that is
		\begin{align}\label{lips2}
		\norm{2\mfy - S(w^{2},\theta_{1}) - S(w^{2},\theta_{2})} \leq 2(B+R)
		\end{align}
		
		Next, we will apply Proposition 6.1 in \cite{bonnans1998optimization} to bound $ \norm{S(w^{2},\theta_{2}) - S(w^{2},\theta_{1})} $.
		
		Under Assumptions 3.1 - 3.2, the conditions of Proposition 6.1 in \cite{bonnans1998optimization} are satisfied. Therefore,
		\begin{align}\label{lipschitz2}
		\norm{S(w^{2},\theta_{2}) - S(w^{2},\theta_{1})}  \leq \frac{2\kappa}{\lambda}\norm{\theta_{1} - \theta_{2}}
		\end{align}
		
		Plugging \eqref{lips2} and \eqref{lipschitz2} in \eqref{lips} yields the claim.
	\end{itemize}
\end{proof}

\section{Data for the Portfolio Optimization Problem}
\renewcommand{\arraystretch}{.6}	
\begin{table}[ht]
	\centering
	\caption{True Expected Return}
	\label{table:true_return}
	\begin{tabular}{@{}ccccccccc@{}}
		\toprule
		Security        & 1      & 2      & 3      & 4      & 5      & 6      & 7      & 8      \\ \midrule
		Expected Return & 0.1791 & 0.1143 & 0.1357 & 0.0837 & 0.1653 & 0.1808 & 0.0352 & 0.0368 \\ \bottomrule
	\end{tabular}
\end{table}
\begin{table}[ht]
	\centering
	\caption{True Return Covariances Matrix}
	\label{table:true_covariance}
	\begin{tabular}{@{}ccccccccccc@{}}
		\toprule
		Security & 1      & 2      & 3      & 4      & 5      & 6      & 7      & 8      \\ \midrule
		1        & 0.1641 & 0.0299 & 0.0478 & 0.0491 & 0.058  & 0.0871 & 0.0603 & 0.0492 \\
		2        & 0.0299 & 0.0720  & 0.0511 & 0.0287 & 0.0527 & 0.0297 & 0.0291 & 0.0326 \\
		3        & 0.0478 & 0.0511 & 0.0794 & 0.0498 & 0.0664 & 0.0479 & 0.0395 & 0.0523 \\
		4        & 0.0491 & 0.0287 & 0.0498 & 0.1148 & 0.0336 & 0.0503 & 0.0326 & 0.0447 \\
		5        & 0.0580  & 0.0527 & 0.0664 & 0.0336 & 0.1073 & 0.0483 & 0.0402 & 0.0533 \\
		6        & 0.0871 & 0.0297 & 0.0479 & 0.0503 & 0.0483 & 0.1134 & 0.0591 & 0.0387 \\
		7        & 0.0603 & 0.0291 & 0.0395 & 0.0326 & 0.0402 & 0.0591 & 0.0704 & 0.0244 \\
		8        & 0.0492 & 0.0326 & 0.0523 & 0.0447 & 0.0533 & 0.0387 & 0.0244 & 0.1028 \\ \bottomrule
	\end{tabular}
\end{table}


\bibliographystyle{unsrt}
\bibliography{reference}   

\begin{thebibliography}{10}

\bibitem{ahuja2001inverse}
Ravindra~K Ahuja and James~B Orlin.
\newblock Inverse optimization.
\newblock {\em Operations Research}, 49(5):771--783, 2001.

\bibitem{keshavarz2011imputing}
Arezou Keshavarz, Yang Wang, and Stephen Boyd.
\newblock Imputing a convex objective function.
\newblock In {\em Intelligent Control (ISIC), 2011 IEEE International Symposium
  on}, pages 613--619. IEEE, 2011.

\bibitem{bertsimas2015data}
Dimitris Bertsimas, Vishal Gupta, and Ioannis~Ch Paschalidis.
\newblock Data-driven estimation in equilibrium using inverse optimization.
\newblock {\em Mathematical Programming}, 153(2):595--633, 2015.

\bibitem{chan2019inverse}
Timothy~CY Chan, Taewoo Lee, and Daria Terekhov.
\newblock Inverse optimization: Closed-form solutions, geometry, and goodness
  of fit.
\newblock {\em Management Science}, 2019.

\bibitem{barmann2017emulating}
Andreas B{\"a}rmann, Sebastian Pokutta, and Oskar Schneider.
\newblock {Emulating the expert: Inverse optimization through online learning}.
\newblock In {\em International Conference on Machine Learning}, pages
  400--410, 2017.

\bibitem{dong2018ioponline}
Chaosheng Dong, Yiran Chen, and Bo~Zeng.
\newblock Generalized inverse optimization through online learning.
\newblock In {\em Advances in Neural Information Processing Systems}, pages
  86--95, 2018.

\bibitem{dong2020imop}
Chaosheng Dong and Bo~Zeng.
\newblock Expert learning through generalized inverse multiobjective
  optimization: Models, insights, and algorithms.
\newblock In {\em International Conference on Machine Learning}, 2020.

\bibitem{dong2018inferring}
Chaosheng Dong and Bo~Zeng.
\newblock Inferring parameters through inverse multiobjective optimization.
\newblock {\em arXiv preprint arXiv:1808.00935}, 2018.

\bibitem{markowitz1952portfolio}
Harry Markowitz.
\newblock Portfolio selection.
\newblock {\em The Journal of Finance}, 7(1):77--91, 1952.

\bibitem{chan2014generalized}
Timothy~CY Chan, Tim Craig, Taewoo Lee, and Michael~B Sharpe.
\newblock Generalized inverse multiobjective optimization with application to
  cancer therapy.
\newblock {\em Operations Research}, 62(3):680--695, 2014.

\bibitem{esfahani2017data}
Peyman~Mohajerin Esfahani, Soroosh Shafieezadeh-Abadeh, Grani~A Hanasusanto,
  and Daniel Kuhn.
\newblock Data-driven inverse optimization with imperfect information.
\newblock {\em Mathematical Programming}, 167(1):191--234, 2018.

\bibitem{birge2017inverse}
John~R Birge, Ali Horta{\c{c}}su, and J~Michael Pavlin.
\newblock Inverse optimization for the recovery of market structure from market
  outcomes: An application to the miso electricity market.
\newblock {\em Operations Research}, 65(4):837--855, 2017.

\bibitem{chan2018trade}
Timothy~CY Chan and Taewoo Lee.
\newblock Trade-off preservation in inverse multi-objective convex
  optimization.
\newblock {\em European Journal of Operational Research}, 270(1):25--39, 2018.

\bibitem{chan2020inverse}
Timothy~CY Chan and Neal Kaw.
\newblock Inverse optimization for the recovery of constraint parameters.
\newblock {\em European Journal of Operational Research}, 282(2):415--427,
  2020.

\bibitem{Shafieezadeh2015wdrlogistic}
Soroosh Shafieezadeh-Abadeh, Peyman~Mohajerin Esfahani, and Daniel Kuhn.
\newblock Distributionally robust logistic regression.
\newblock In {\em NIPS}, 2015.

\bibitem{gao2016distributionally}
Rui Gao and Anton~J Kleywegt.
\newblock Distributionally robust stochastic optimization with wasserstein
  distance.
\newblock {\em arXiv preprint arXiv:1604.02199}, 2016.

\bibitem{esfahani2018distributionally}
Peyman~Mohajerin Esfahani and Daniel Kuhn.
\newblock Data-driven distributionally robust optimization using the
  wasserstein metric: Performance guarantees and tractable reformulations.
\newblock {\em Mathematical Programming}, 171(1-2):115--166, 2018.

\bibitem{villani2008optimal}
C{\'e}dric Villani.
\newblock {\em Optimal transport: old and new}, volume 338.
\newblock Springer Science \& Business Media, 2008.

\bibitem{gass1955computational}
Saul Gass and Thomas Saaty.
\newblock The computational algorithm for the parametric objective function.
\newblock {\em Naval Research Logistics}, 2(1-2):39--45, 1955.

\bibitem{miettinen2012nonlinear}
Kaisa Miettinen.
\newblock {\em Nonlinear Multiobjective Optimization}, volume~12.
\newblock Springer Science \& Business Media, 2012.

\bibitem{bonnans1998optimization}
J~Fr{\'e}d{\'e}ric Bonnans and Alexander Shapiro.
\newblock {Optimization problems with perturbations: A guided tour}.
\newblock {\em SIAM Review}, 40(2):228--264, 1998.

\bibitem{luo2017decomposition}
Fengqiao Luo and Sanjay Mehrotra.
\newblock Decomposition algorithm for distributionally robust optimization
  using wasserstein metric.
\newblock {\em arXiv preprint arXiv:1704.03920}, 2017.

\bibitem{hettich1993semi}
Rainer Hettich and Kenneth~O Kortanek.
\newblock Semi-infinite programming: theory, methods, and applications.
\newblock {\em SIAM Review}, 35(3):380--429, 1993.

\bibitem{sahinidis1996baron}
Nikolaos~V Sahinidis.
\newblock Baron: A general purpose global optimization software package.
\newblock {\em Journal of global optimization}, 8(2):201--205, 1996.

\bibitem{joachims2009cutting}
Thorsten Joachims, Thomas Finley, and Chun-Nam~John Yu.
\newblock Cutting-plane training of structural svms.
\newblock {\em Machine learning}, 77(1):27--59, 2009.

\bibitem{mutapcic2009cutting}
Almir Mutapcic and Stephen Boyd.
\newblock Cutting-set methods for robust convex optimization with pessimizing
  oracles.
\newblock {\em Optimization Methods \& Software}, 24(3):381--406, 2009.

\bibitem{NIPS2018_7534}
Jaeho Lee and Maxim Raginsky.
\newblock Minimax statistical learning with wasserstein distances.
\newblock In {\em Advances in Neural Information Processing Systems}, pages
  2687--2696, 2018.

\bibitem{gurobi}
Inc. Gurobi~Optimization.
\newblock Gurobi optimizer reference manual, 2016.

\bibitem{bezanson2017julia}
Jeff Bezanson, Alan Edelman, Stefan Karpinski, and Viral~B Shah.
\newblock {Julia: A fresh approach to numerical computing}.
\newblock {\em SIAM Review}, 59(1):65--98, 2017.

\end{thebibliography}


\end{document}